\definecolor{shadecolor}{gray}{0.875}
\definecolor{dblue}{rgb}{0,0,.6}
\newcommand{\mathds}[1]{{\mathbb #1}}
\numberwithin{equation}{section}
\begin{document}
%
%
%
\theoremstyle{definition}
\newtheorem{Definition}{Definition}[section]
\newtheorem*{Definitionx}{Definition}
\newtheorem{Convention}{Definition}[section]
\newtheorem{Construction}{Construction}[section]
\newtheorem{Example}[Definition]{Example}
\newtheorem{Examples}[Definition]{Examples}
\newtheorem{Remark}[Definition]{Remark}
\newtheorem*{Remarkx}{Remark}
\newtheorem{Remarks}[Definition]{Remarks}
\newtheorem{Caution}[Definition]{Caution}
\newtheorem{Conjecture}[Definition]{Conjecture}
\newtheorem*{Conjecturex}{Conjecture}
\newtheorem{Question}[Definition]{Question}
\newtheorem{Hypothesis}[Definition]{Hypothesis}
\newtheorem*{Questionx}{Question}
\newtheorem*{Acknowledgements}{Acknowledgements}
\newtheorem*{Notation}{Notation}
\newtheorem*{Organization}{Organization}
\newtheorem*{Disclaimer}{Disclaimer}
\theoremstyle{plain}
\newtheorem{Theorem}[Definition]{Theorem}
\newtheorem*{Theoremx}{Theorem}

\newtheorem*{thmA}{Theorem A}
\newtheorem*{thmB}{Theorem B}

\newtheorem{Claim}[Definition]{Claim}
\newtheorem{Proposition}[Definition]{Proposition}
\newtheorem*{Propositionx}{Proposition}
\newtheorem{Lemma}[Definition]{Lemma}
\newtheorem{Corollary}[Definition]{Corollary}
\newtheorem*{Corollaryx}{Corollary}
\newtheorem{Fact}[Definition]{Fact}
\newtheorem{Facts}[Definition]{Facts}
\newtheoremstyle{voiditstyle}{3pt}{3pt}{\itshape}{\parindent}%
{\bfseries}{.}{ }{\thmnote{#3}}%
\theoremstyle{voiditstyle}
\newtheorem*{VoidItalic}{}
\newtheoremstyle{voidromstyle}{3pt}{3pt}{\rm}{\parindent}%
{\bfseries}{.}{ }{\thmnote{#3}}%
\theoremstyle{voidromstyle}
\newtheorem*{VoidRoman}{}

\newenvironment{specialproof}[1][\proofname]{\noindent\textit{#1.} }{\qed\medskip}
\newcommand{\blowup}{\rule[-3mm]{0mm}{0mm}}
\newcommand{\cal}{\mathcal}
\newcommand{\Aff}{{\mathds{A}}}
\newcommand{\BB}{{\mathds{B}}}
\newcommand{\CC}{{\mathds{C}}}
\newcommand{\BC}{{\mathds{C}}}
\newcommand{\EE}{{\mathds{E}}}
\newcommand{\FF}{{\mathds{F}}}
\newcommand{\GG}{{\mathds{G}}}
\newcommand{\HH}{\mathrm{H}}
\newcommand{\NN}{{\mathds{N}}}
\newcommand{\ZZ}{{\mathds{Z}}}
\newcommand{\PP}{{\mathds{P}}}
\newcommand{\QQ}{{\mathds{Q}}}
\newcommand{\BQ}{{\mathds{Q}}}
\newcommand{\RR}{{\mathds{R}}}
\newcommand{\Liea}{{\mathfrak a}}
\newcommand{\Lieb}{{\mathfrak b}}
\newcommand{\Lieg}{{\mathfrak g}}
\newcommand{\Liem}{{\mathfrak m}}
\newcommand{\ideala}{{\mathfrak a}}
\newcommand{\idealb}{{\mathfrak b}}
\newcommand{\idealg}{{\mathfrak g}}
\newcommand{\idealm}{{\mathfrak m}}
\newcommand{\idealp}{{\mathfrak p}}
\newcommand{\idealq}{{\mathfrak q}}
\newcommand{\idealI}{{\cal I}}
\newcommand{\lin}{\sim}
\newcommand{\num}{\equiv}
\newcommand{\dual}{\ast}
\newcommand{\iso}{\cong}
\newcommand{\homeo}{\approx}
\newcommand{\mm}{{\mathfrak m}}
\newcommand{\pp}{{\mathfrak p}}
\newcommand{\qq}{{\mathfrak q}}
\newcommand{\rr}{{\mathfrak r}}
\newcommand{\pP}{{\mathfrak P}}
\newcommand{\qQ}{{\mathfrak Q}}
\newcommand{\rR}{{\mathfrak R}}
\newcommand{\OO}{{\cal O}}
\newcommand{\CO}{{\cal O}}
\newcommand{\numero}{{n$^{\rm o}\:$}}
\newcommand{\mf}[1]{\mathfrak{#1}}
\newcommand{\mc}[1]{\mathcal{#1}}
\newcommand{\into}{{\hookrightarrow}}
\newcommand{\onto}{{\twoheadrightarrow}}
\newcommand{\Spec}{\operatorname{Spec}}
\newcommand{\BigSpec}{\operatorname{\mathbf{Spec}}}
\newcommand{\Spf}{\operatorname{Spf}}
\newcommand{\Proj}{\operatorname{Proj}}
\newcommand{\Pic}{\operatorname{Pic}}
\newcommand{\Mov}{\operatorname{Mov}}
\newcommand{\Nef}{\operatorname{Nef}}
\newcommand{\MW}{\operatorname{MW}}
\newcommand{\Br}{\operatorname{Br}}
\newcommand{\NS}{\operatorname{NS}}
\newcommand{\Gr}{\operatorname{Gr}}
\newcommand{\Lie}{\operatorname{Lie}}
\newcommand{\sm}{\operatorname{sm}}
\newcommand{\Sym}{\operatorname{Sym}}
\newcommand{\Aut}{\operatorname{Aut}}
\newcommand{\Autp}{\operatorname{Aut^p}}
\newcommand{\ord}{\operatorname{ord}}
\newcommand{\coker}{{\rm coker}\,}
\newcommand{\divisor}{\operatorname{div}}
\newcommand{\Def}{\operatorname{Def}}
\newcommand{\rank}{\mathop{\mathrm{rank}}\nolimits}
\newcommand{\Ext}{\mathop{\mathrm{Ext}}\nolimits}
\newcommand{\EXT}{\mathop{\mathscr{E}{\kern -2pt {xt}}}\nolimits}
\newcommand{\Hom}{\mathop{\mathrm{Hom}}\nolimits}
\newcommand{\HOM}{\mathop{\mathscr{H}{\kern -3pt {om}}}\nolimits}
\newcommand{\chari}{\mathop{\mathrm{char}}\nolimits}
\newcommand{\ch}{\mathop{\mathrm{ch}}\nolimits}
\newcommand{\CH}{\mathop{\mathrm{CH}}\nolimits}
\newcommand{\supp}{\mathop{\mathrm{supp}}\nolimits}
\newcommand{\codim}{\mathop{\mathrm{codim}}\nolimits}
\newcommand{\IMAGE}{\mathop{\mathrm{Im}}\nolimits}
\newcommand{\Span}{\mathop{\mathrm{Span}}\nolimits}
\newcommand{\DIV}{\mathop{\mathrm{div}}\nolimits}
\renewcommand{\tilde}{\widetilde}
\newcommand{\pr}{\mathrm{pr}}
\newcommand{\calA}{\mathscr{A}}
\newcommand{\calH}{\mathscr{H}}
\newcommand{\calL}{\mathscr{L}}
\newcommand{\calM}{\mathscr{M}}
\newcommand{\bcalM}{\overline{\mathscr{M}}}
\newcommand{\calN}{\mathscr{N}}
\newcommand{\calX}{\mathscr{X}}
\newcommand{\calK}{\mathscr{K}}
\newcommand{\calD}{\mathscr{D}}
\newcommand{\calY}{\mathscr{Y}}
\newcommand{\calC}{\mathscr{C}}
\newcommand{\CM}{\mathcal{M}}
\newcommand{\CK}{\mathcal{K}}
\newcommand{\CV}{\mathcal{V}}
\newcommand{\piet}{{\pi_1^{\rm \acute{e}t}}}
\newcommand{\Het}[1]{{H_{\rm \acute{e}t}^{{#1}}}}
\newcommand{\Hfl}[1]{{H_{\rm fl}^{{#1}}}}
\newcommand{\Hcris}[1]{{H_{\rm cris}^{{#1}}}}
\newcommand{\HdR}[1]{{H_{\rm dR}^{{#1}}}}
\newcommand{\hdR}[1]{{h_{\rm dR}^{{#1}}}}
\newcommand{\loc}{{\rm loc}}
\newcommand{\et}{{\rm \acute{e}t}}
\newcommand{\defin}[1]{{\bf #1}}

\newcommand{\alb}{\operatorname{alb}}
\newcommand{\Alb}{\operatorname{Alb}}
\newcommand{\St}{\mathrm{St}}

\ifthenelse{\equal{1}{1}}{
	\newcommand{\blue}[1]{{\color{blue}#1}}
	\newcommand{\green}[1]{{\color{green}#1}}
	\newcommand{\red}[1]{{\color{red}#1}}
	\newcommand{\purple}[1]{{\color{purple}#1}}
}{
	\newcommand{\blue}[1]{}
	\newcommand{\green}[1]{}
	\newcommand{\red}[1]{}
	\newcommand{\purple}[1]{}
}

\title{When is the diagonal contractible?}

\author{Xi Chen}
\address{632 Central Academic Building, University of Alberta, Edmonton, Alberta T6G 2G1, Canada}
\email{xichen@math.ualberta.ca}

\author{Frank Gounelas}
\address{Mathematisches Institut, Universit\"at Bonn, Endenicher Allee 60, 53115 Bonn, Germany}
\email{gounelas@math.uni-bonn.de}

\date{May 6, 2026}

\subjclass[2020]{14E05, 14K12, 14K30}
\keywords{Diagonal, Albanese morphism, subvarieties of abelian varieties, birational contractions, geometric non-degeneracy}

\maketitle

\begin{abstract}
    For a smooth projective complex variety $X$, we study the problem of when there exists a
    birational morphism $X\times X\to Y$ to a projective variety $Y$ contracting the diagonal
    $\Delta_X\subset X\times X$ to a subvariety of smaller dimension. We prove this happens if and
    only if various conditions related to the Albanese morphism of $X$ are satisfied. We also give
    necessary and sufficient conditions for the existence of a contraction which is an
    isomorphism outside the diagonal and initiate the problem of understanding contractions of
    diagonals in higher products.
\end{abstract}

\section{Introduction}

Let $X$ be a smooth projective variety over the complex numbers $\CC$. The main purpose of this paper is to study the following problem.

\begin{Question}\label{question:diagonal contraction}
	Let $\Delta_X\subset X\times X$ be the diagonal. When does there
	exist a birational projective morphism $\pi\colon X\times X\to Y$ so that $\pi(\Delta_X)$ is a point?
\end{Question}

If the above happens, we say that the diagonal $\Delta_X\subset X\times X$ is \textit{contractible to a point}.
In this paper, in addition to contractions of the diagonal to a point, we consider contractions $\pi$ such that $\dim \pi(\Delta_X) < \dim X$.
\begin{Definition}
	We say that the diagonal $\Delta_X \subset X\times X$ is \textit{contractible to a variety of dimension $m$} or \textit{has an $m$-dimensional contraction} if there exists a birational projective morphism $\pi\colon X\times X\to Y$ such that $\dim \pi(\Delta_X) = m$.
\end{Definition}

\begin{Remark} 
	By taking the Stein factorisation, the existence of a birational $\pi$ contracting $\Delta_X$ is equivalent to the existence of a generically finite $\pi$ contracting $\Delta_X$, so we rather treat the stronger assumption as our main question.
\end{Remark}

In view of the classical case of $\dim X=1$ (see Section~\ref{sec:examples}), one would expect that
contractibility of the diagonal depends on subtle positivity properties of $X$. Instances of this
can already be found in \cite{lehmannottem}, where the positivity of the cycle
$[\Delta_X]\in\CH_{\dim X}(X\times X)$ is studied. We will see that the question of contractibility
of the diagonal is rather simple and in fact equivalent to properties of the Albanese morphism (cf.\
\cite[\S5.3]{lehmannottem}).

We now state the main theorems of this paper. For a smooth projective variety $X$, we denote the Albanese morphism by
\[
\begin{tikzcd}
	\alb\colon X \ar[r] & \Alb(X).
\end{tikzcd}
\]

For an abelian variety $A$, we may consider the \textit{difference map} 
\[
\begin{tikzcd}[row sep=tiny]
d\colon A\times A \ar[r] & A \\
(a_1, a_2) \ar[r, mapsto] & a_1 - a_2
\end{tikzcd}
\]
and the induced map 
\[
	\begin{tikzcd}[row sep=tiny]
		\lambda\colon X\times X \ar[rr, "\alb\times\alb"] & & \Alb(X)\times\Alb(X) \ar[r, "d"] & \Alb(X) \\
		(x_1, x_2) \ar[rrr, mapsto] &  &  &  \alb(x_1) - \alb(x_2).
	\end{tikzcd}
\]

\begin{Theorem}\label{thm:diagonal contraction}
	For a smooth projective variety $X$, the following are equivalent:
\begin{enumerate}
\item[(A)]
The diagonal $\Delta_X\subset X\times X$ is contractible to a point.
\item[(B)]
\begin{equation}\label{eq:contraction2point}
		\dim \lambda(X\times X) = 2\dim X.
	\end{equation}
\item[(C)]
\begin{equation}\label{eq:contraction2point2}
\dim \Alb(X) \ge 2\dim X = 2\dim \alb(X).
\end{equation}
and for every abelian subvariety $B\subset \Alb(X)$ and a general point $a\in \alb(X)$,
\begin{equation}\label{eq:contraction2point3} 
\dim B \ge 2 \dim ((B+a)\cap \alb(X))
\end{equation}
\item[(D)]
Two general points of $X$ impose independent conditions on the holomorphic cotangent bundle $\Omega_X^1$. That is, for two general points $x,y\in X$, the evaluation map
\[
\begin{tikzcd}
H^0(\Omega^1_X)\ar[two heads]{r} & \Omega^1_{X,x}\oplus\Omega^1_{X,y}
\end{tikzcd}
\]
is surjective. Or equivalently,
\begin{equation}\label{eq:contraction2point4} 
\operatorname{Image}\left(\wedge^n H^0(\Omega^1_X\otimes I_x\right) \to H^0(K_X)) \ne 0
\end{equation}
for a general point $x\in X$, where $n=\dim X$ and $I_x$ is the ideal sheaf of $x\in X$.
\end{enumerate}
\end{Theorem}

\begin{Remark}\label{rem:thm:diagonal contraction}
In the original version of this paper, we thought that \eqref{eq:contraction2point2} is sufficient
to guarantee \eqref{eq:contraction2point} (cf.\ Section~\ref{subsection:higherdim}). We are very grateful to the referee
for pointing out that this is not the case. When $\Alb(X)$ is not simple, we need
\eqref{eq:contraction2point3} in addition to \eqref{eq:contraction2point2}. The equivalence of
(B) and (C) is essentially a statement about abelian varieties, which is probably well known yet we
include a proof, whereas (B) and (D) being equivalent is automatic from the definitions.
\end{Remark}

To explain the conditions in the theorem, we say that a variety has \textit{maximal Albanese
dimension} if \[\dim X = \dim\alb(X),\] i.e., the Albanese morphism is generically finite onto its
image in the Albanese variety. This holds if and only if $\Omega^1_X$ is generically generated by
global sections, for example if it is globally generated - examples of such varieties are
subvarieties of abelian varieties. The first condition in \eqref{eq:contraction2point2} is about the \textit{irregularity} of $X$
\[
\begin{split}
q&=h^1(X,\OO_X)=h^0(X,\Omega^1_X)\\
&=\dim\Alb(X)\geq2\dim X.
\end{split}
\]
Condition (D) asks that this $q$-dimensional space of global $1$-forms evaluates surjectively at any
\emph{two} general points simultaneously, i.e., $\Omega^1_X$ is globally generated at general pairs
of points. Equivalently, the Gauss map $\gamma\colon\alb(X)\dashrightarrow\mathrm{Gr}(\dim X,q)$,
sending a smooth point to its tangent plane in $\mathrm{Lie}(\Alb(X))$, has the property that the
tangent planes at two general points have trivial intersection (see the proof of the implication (B)
$\implies$ (C) below).

Examples satisfying the conditions of the theorem are geometrically non-degenerate subvarieties in
abelian varieties of dimension at least twice their own dimension; see Sections \ref{sec:examples}
and \ref{subsection:higherdim} for precise definitions and examples, including general complete intersections of codimension $\ge \frac{1}{2}\dim\Alb(X)$. A famous example in the literature is the Fano surface of lines $S$ of a smooth cubic threefold $X$. Here $S\to \Alb(S)\cong J(X)$ is a closed embedding into the intermediate Jacobian of $X$, which is an abelian fivefold. We have $\dim J(X)=5\geq2\dim S$ and actually condition (B) holds: it is known by Clemens--Griffiths that the difference map
\[S\times S\to J(X)\]
has degree 6 onto its image, the (singular) theta divisor. 

As it is rather simple, we give a quick summary of the method of proof of the above theorem in the most important case, namely when $q=0$. If $f\colon X\times X\to Y$ is birational, then for any ample line bundle $L$ on $Y$, we have that $f^*L$ is big and nef. Given our assumption on $q$, we may write $f^*L=\pi_1^*D_1+\pi_2^*D_2$ for $D_i\in\Pic X$ and $\pi_i$ the two projections (in general there is a contribution from the Picard group of the Albanese variety, see Lemma~\ref{lemma:picard group decomposition}). Restricting to fibres of $\pi_i$, we see that $D_i$ must also be big and nef. If $f(\Delta_X)$ were a point, then $D_1+D_2=f^*L|_{\Delta_X}=0$ which gives a contradiction.

It is worth noting that the image $Y$ of the birational contraction is basically never smooth along the image of $\Delta_X$ (see \cite[Corollary 2.63]{Kollar-Mori}) - for example in the curve case (see Section~\ref{subsection:curves}) the image is singular but with Gorenstein singularities \cite[Corollary 2.5]{badescu}. 

We also prove a necessary and sufficient condition for $\Delta_X$ to be contractible to a variety of dimension $m<\dim X$, although it is less explicit than \eqref{eq:contraction2point}. The two statements could be combined but we preferred to give a cleaner statement of Theorem~\ref{thm:diagonal contraction}.

\begin{Theorem}\label{thm:contract2variety}
For a smooth projective variety $X$, the diagonal $\Delta_X\subset X\times X$ is contractible to a
variety of dimension $m$ if and only if there exists a dominant morphism $\pi\colon X\to W$ to a
projective variety $W$ with $\dim W = m$ such that $\dim \lambda (X_p\times X_p) = 2\dim X_p$ for a
general point $p\in W$ and $X_p = \pi^{-1}(p)$.
\end{Theorem}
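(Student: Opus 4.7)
The plan is to parallel the proof sketch of Theorem \ref{thm:diagonal contraction} relatively over the base $W$; when $W$ is a point the conditions in Theorem \ref{thm:contract2variety} reduce to those of Theorem \ref{thm:diagonal contraction}, since by the universal property of the Albanese, the smallest abelian subvariety of $\Alb(X)$ containing a translate of $\alb(X)$ is $\Alb(X)$ itself. So one expects the general case to split along the fibres of $\pi\colon X\to W$.

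For the ``if'' direction, I would define the morphism
\[
\psi \colon X \times X \longrightarrow \Alb(X) \times W \times W, \qquad (x,y)\longmapsto\bigl(\alb(x)-\alb(y),\,\pi(x),\,\pi(y)\bigr).
\]
The target is projective and $\psi(\Delta_X)=\{0\}\times\Delta_W$ has dimension $m$. By the Remark following Question \ref{question:diagonal contraction}, it suffices to show $\psi$ is generically finite, after which its Stein factorisation provides the desired birational contraction. The fibre of $\psi$ over a general $(z,p,q)$ is the preimage of $z$ under the difference map $X_p\times X_q\to\Alb(X)$, $(x,y)\mapsto\alb(x)-\alb(y)$. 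Condition (i) makes $\alb|_{X_p}$ generically finite onto $\alb(X_p)$, and condition (ii), combined with the standard fact that for an irreducible subvariety $V$ of an abelian variety the set $V-V$ has dimension $\min(2\dim V,\dim A_V)$ where $A_V$ is the smallest abelian subvariety containing a translate of $V$, forces the image of this difference map to have dimension exactly $2\dim X_p$, yielding generic finiteness on the diagonal of $W\times W$. Semicontinuity then extends this to an open subset of $W\times W$.

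For the ``only if'' direction, given a birational $f\colon X\times X\to Y$ with $\dim f(\Delta_X)=m$, I would take $\pi\colon X\to W$ to be the Stein factorisation of $f|_{\Delta_X}\colon X\to f(\Delta_X)$. For a general $p\in W$, the subvariety $X_p\times X_p\subset X\times X$ contains $\Delta_{X_p}$, which $f$ contracts to a point. After checking that $X_p\times X_p$ is not contained in the exceptional locus of $f$, one picks $L$ ample on $Y$, decomposes $f^*L$ using Lemma \ref{lemma:picard group decomposition}, and restricts to $X_p\times X_p$. The positivity argument of the introduction to Theorem \ref{thm:diagonal contraction}, combined with the triviality of the restriction of $f^*L$ along $\Delta_{X_p}$, forces (i); the Albanese component of the decomposition, suitably restricted, is what forces (ii), since a failure of (ii) on $X_p\times X_p$ would allow the Albanese contribution to be absorbed into the $\pi_i^*$ summands, reducing to the same contradiction as in the case $q=0$ of Theorem \ref{thm:diagonal contraction}. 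The main obstacle lies precisely here: one must verify that $X_p\times X_p$ is not entirely contained in the exceptional locus of $f$ for general $p$ (so that $f^*L|_{X_p\times X_p}$ carries enough positivity), and one must carry out the Picard decomposition precisely enough to extract the sharp condition (ii) rather than the coarser inequality $\dim\Alb(X_p)\ge 2\dim X_p$.
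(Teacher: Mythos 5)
Your ``if'' direction is essentially the paper's own argument: your map $\psi$ is exactly the paper's $\eta$ (up to reordering the factors), your use of the dimension formula for $V-V$ inside the smallest abelian subvariety containing a translate of $V$ is the intended application of Lemma \ref{lem:debarredifference}, and the passage from generic finiteness over the diagonal of $W\times W$ to generic finiteness of $\psi$ via semicontinuity of fibre dimension is the same step the paper makes. The genuine gap is in the ``only if'' direction, at precisely the point you flag as ``the main obstacle'' and leave unresolved: you need bigness of $f^*L$ (or of the symmetrized divisor $G$) restricted to the special fibres $X_p\times X_p$, which lie over the diagonal of $W\times W$ and therefore do \emph{not} form a covering family of $X\times X$; so there is no a priori reason they avoid the null/exceptional locus, and ``checking that $X_p\times X_p$ is not contained in the exceptional locus of $f$'' is not something you can simply arrange (nor is it quite the right condition — what is needed is bigness of the restriction). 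The paper closes this with an algebraic-equivalence trick: if the nef divisor $G$ were not big on $X_p\times X_p$, then $G^{2n-2m}\cdot(X_p\times X_p)=0$; but $X_p\times X_p$ is algebraically, hence numerically, equivalent to $X_p\times X_q$ for general $(p,q)$, so $G^{2n-2m}\cdot(X_p\times X_q)=0$ as well, and this contradicts bigness of $G$ on $X\times X$ because the $X_p\times X_q$ \emph{do} cover $X\times X$ (restrict a decomposition of $G$ into ample plus effective to a general member of this covering family). Without this transfer of positivity from the general $X_p\times X_q$ to the special $X_p\times X_p$, your fibrewise Picard-decomposition argument does not get started.

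A secondary point: your proposed mechanism for extracting the sharp condition (ii) — ``absorbing the Albanese contribution into the $\pi_i^*$ summands'' — is vague and not how the conclusion is reached. In the paper, once bigness of $G|_{X_p\times X_p}$ is combined with $G\cdot\lambda_p^*(a)=0$ for all $a\in A$ (obtained by running the argument of Proposition \ref{prop:irreg} with $D|_{\Delta_{X_p}}$ trivial, using Lemma \ref{lemma:picard group decomposition} exactly as you intend), one concludes directly that $\lambda_p:X_p\times X_p\to A$ is generically finite onto its image, so that image has dimension $2\dim X_p$; both (i) and (ii) then follow at once, the latter because if $\alpha(X_p)$ were contained in a translate of an abelian subvariety $B$ with $\dim B<2\dim X_p$, the image of $\lambda_p$ would lie in $d(B\times B)\subseteq B$, of dimension less than $2\dim X_p$. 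So the sharp form of (ii) is immediate once the bigness step is in place; the missing idea in your write-up is that bigness transfer via algebraic equivalence, not a finer analysis of the Picard decomposition.
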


For interesting examples with this behaviour one can consider products of curves, see Lemma~\ref{lem:productofcurves}.

We end with some questions and stronger variants. As we saw, when \eqref{eq:contraction2point} holds, $\lambda$ is generically finite over its image and it contracts $\Delta_X$ to a point. 

It is reasonable to ask whether this map is essentially ``the only'' morphism which contracts $\Delta_X$. To be more precise, let
\begin{equation}\label{eq:lambda}
	\begin{tikzcd}
		\lambda\colon X\times X\ar[r, "\lambda_\St"] & Y \ar{r} & \Alb(X)
	\end{tikzcd}
\end{equation}
be the Stein factorisation of $\lambda$ such that $X\times X\to Y$ is birational. 

\begin{Question} \label{question:throughdifference} 
	Is it true that every generically finite map $X\times X\to Y'$ contracting $\Delta_X$ to a point factors through $\lambda_\St$?
\end{Question}

We discuss this in Section~\ref{subsubsection:factoringthroughdifference} and in particular give a positive answer in some limited cases (see Lemma~\ref{lem:amplecotangent}).

There are other variants of Question~\ref{question:diagonal contraction}. One could additionally require that $\pi\colon X\times X \setminus \Delta_X\to Y\setminus \pi(\Delta_X)$ is an isomorphism. 
Note that if $X$ is a smooth projective variety which contains a rationally connected subvariety $Z$, e.g., a rational curve, then $\lambda$ contracts not only the diagonal $\Delta_X$, but also $Z\times Z$ to a point, since an abelian variety does not contain any rational curves. We can nevertheless prove the following criterion for this variant of the problem.

\begin{Theorem}\label{thm:isooutsidediagonal}
	For a smooth projective variety $X$, there exists a contraction $f\colon X\times X\to Y$ of the diagonal to a point such that \[\begin{tikzcd}
	X\times X\setminus\Delta_X \ar{r}{f} & Y\setminus f(\Delta_X)
	\end{tikzcd}\]
	is an isomorphism if and only if $\alb\colon X\to A = \Alb(X)$ is finite and birational over its image such that
	\begin{equation}\label{eq:dim1}
	\dim (X\times_A X \setminus\Delta_X) \le 0
	\end{equation}
	and for all $0\neq a\in A$ we have
	\begin{equation}\label{eq:dim2}
	\dim \left(\alb(X)\cap (\alb(X)+a)\right)\leq0.
	\end{equation}
\end{Theorem}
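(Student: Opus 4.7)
\emph{Sufficiency.} Assume $\alb$ is finite and birational onto its image and that \eqref{eq:dim1}, \eqref{eq:dim2} hold, and take $f=\lambda_{\St}$ from \eqref{eq:lambda} as the candidate contraction. Condition \eqref{eq:dim2} forces the difference map on $\alb(X)$ to be generically finite, so $q\geq 2\dim X$, and Theorem \ref{thm:diagonal contraction} then gives that $\lambda_{\St}$ contracts $\Delta_X$ to a point $y_0$. The fibres of $\lambda$ satisfy $\lambda^{-1}(0)=X\times_A X=\Delta_X\sqcup F$ with $\dim F\leq 0$ by \eqref{eq:dim1}; and for $a\neq 0$ the first projection sends $\lambda^{-1}(a)$ into $\alb(X)\cap(\alb(X)+a)$ with $\alb$-finite fibres, so $\dim\lambda^{-1}(a)\leq 0$ by \eqref{eq:dim2}. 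Since $X\times X$ is smooth, the Stein target $Y$ is normal, and the restriction of $\lambda_{\St}$ to $X\times X\setminus\Delta_X\to Y\setminus\{y_0\}$ is proper with singleton fibres (hence finite), birational, and onto a normal target, so it is an isomorphism.

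\emph{Necessity.} Suppose $f:X\times X\to Y$ is as given and set $y_0=f(\Delta_X)$. By Theorem \ref{thm:diagonal contraction} one has $q\geq 2\dim X$ and maximal Albanese dimension, so $\lambda$ is generically finite. As $Y\setminus\{y_0\}\cong X\times X\setminus\Delta_X$ is smooth, the normalisation $n:Y^\nu\to Y$ is an isomorphism outside $y_0$, and $f$ factors through $f^\nu:X\times X\to Y^\nu$, still iso outside $\Delta_X$, with $f^\nu(\Delta_X)$ a single point $y_0^\nu$ (by connectedness of $\Delta_X$). Rigidity applied to $f^\nu$ and $\lambda$ yields $g:Y^\nu\to A$ with $\lambda=g\circ f^\nu$, and Stein-factorising $g$ produces a birational $g_s:Y^\nu\to Y_{\St}$ with $\lambda_{\St}=g_s\circ f^\nu$. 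To deduce \eqref{eq:dim1} and \eqref{eq:dim2} it suffices to show that $g$ has only finite fibres outside $y_0^\nu$; suppose for contradiction that $W\subset Y^\nu\setminus\{y_0^\nu\}$ is a positive-dimensional component of such a fibre. Then $V=(f^\nu)^{-1}(W)\subset X\times X\setminus\Delta_X$ is positive-dimensional with $\lambda(V)$ a single point, and we choose a curve $C\subset V$. Pulling back an ample divisor $H$ on $Y$, $f^*H$ is big and nef with $f^*H|_{\Delta_X}\equiv 0$ but $f^*H\cdot C>0$ (since $f|_C$ is iso onto a curve in the ample-polarised $Y$). Decomposing $f^*H$ via Lemma \ref{lemma:picard group decomposition} as $\pi_1^*D_1+\pi_2^*D_2$ plus an Albanese term (the latter having zero intersection with $C$ since $\lambda(C)$ is a point), and running the balancing argument from the proof sketch of Theorem \ref{thm:diagonal contraction} -- now with $C$ in place of the diagonal -- yields a numerical contradiction. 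Finally, \eqref{eq:dim1} forces $\alb$ to be finite and birational onto its image, since a positive-dimensional fibre of $\alb$ or a positive-dimensional non-injectivity locus would contribute a positive-dimensional component to $F$.

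\emph{Main obstacle.} The delicate point is the Picard-theoretic argument in the necessity direction ruling out any positive-dimensional subvariety of $X\times X\setminus\Delta_X$ contracted by $\lambda$: given that $f^*H$ vanishes on $\Delta_X$ yet is positive on a $\lambda$-contracted curve $C$, one must correctly control the Albanese contribution from Lemma \ref{lemma:picard group decomposition} upon restricting to both $\Delta_X$ and $C$ in order to extract the required contradiction.
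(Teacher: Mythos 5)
Your sufficiency argument is essentially the paper's: take the Stein factorisation of $\lambda$, check that all fibres away from $\Delta_X$ are finite using \eqref{eq:dim1} and \eqref{eq:dim2} together with finiteness of $\alb$, and conclude by Zariski's main theorem; that part is fine. In the necessity direction your reduction (normalisation, rigidity, Stein factorising $g$) is a harmless but unnecessary detour --- the failure of \eqref{eq:dim1} or \eqref{eq:dim2} directly produces an irreducible curve $C\not\subset\Delta_X$ lying in a fibre $\lambda^{-1}(a)$, which is all you end up using (and note your $W$ should be allowed to pass through $y_0^\nu$; one then works with $W\setminus\{y_0^\nu\}$).

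The genuine gap is the final numerical contradiction, which is the heart of the proof and is not actually carried out. Your one concrete claim there is false: the Albanese term $(\alpha\times\alpha)^*B$ from Lemma \ref{lemma:picard group decomposition} does \emph{not} have zero intersection with $C$ merely because $\lambda(C)$ is a point. What $\lambda(C)=\{a\}$ gives is that $(\alpha\times\alpha)(C)$ lies in the fibre $d^{-1}(a)\cong A$, which is a whole abelian variety, and a general $B\in\Pic_\QQ(A\times A)$ restricted to $d^{-1}(a)$ can perfectly well be ample (e.g.\ $B=p_1^*\Theta$), so $B\cdot(\alpha\times\alpha)_*C$ need not vanish. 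Moreover, even granting that claim, ``the balancing argument with $C$ in place of the diagonal'' does not yield a contradiction: the $q=0$ sketch in the introduction derives its contradiction from restricting to the contracted diagonal, and here the diagonal \emph{is} contractible, so nothing is contradicted by $D_1\cdot\pi_{1*}C+D_2\cdot\pi_{2*}C>0$. The paper's proof needs a specific construction to make the Albanese term manageable: symmetrise to $G=D+\sigma^*D$, set $M=\tfrac12\delta_A^*(B+\sigma^*B)$ and $P=p_1^*M+p_2^*M-(B+\sigma^*B)$, use that $\Delta_X$ is contracted to a point (i.e.\ \eqref{eq:intersection4} with $m=0$) to get $D_1+D_2-\alpha^*M=0$ and hence the exact identity $G=(\alpha\times\alpha)^*P$, and observe that $\delta_A^*P=0$, so that $P$ is numerically trivial on every $d$-fibre by translation/algebraic equivalence; then $0=P\cdot(\alpha\times\alpha)_*C=G\cdot C>0$ is the contradiction. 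Your ``Main obstacle'' paragraph correctly identifies exactly this step as the delicate point, but identifying it is not the same as supplying it, and the route you indicate for it does not work as stated.
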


We explain the conditions a bit. Noting that $\lambda^{-1}(0)=X\times_A X$, condition \eqref{eq:dim1} is equivalent to the fact that $\alb$ is finite and only has finitely many fibres which contain more than one point, so is in particular birational too (i.e., $\alb$ is an isomorphism outside finitely many points of $X$). The second condition \eqref{eq:dim2} is equivalent to the fact that the difference map $d|_{\alb(X)\times\alb(X)}$ is finite outside the diagonal, i.e., in combination with \eqref{eq:dim1}, that $\lambda^{-1}(a)$ is finite for all $0\neq a\in A$.

These two conditions are true for subvarieties $X\subset A$ of abelian varieties,
satisfying $\dim X\cap(a+X)=0$ for all $a\neq0$, and we note that this last condition forces
condition \eqref{eq:contraction2point}. It would be interesting to give some examples of varieties $X$ with finite Albanese morphism $\alb\colon X\to A$ so that only finitely many fibres contain more than one point yet $\alb$ is not a closed embedding.

Another natural question is under what conditions we can contract the diagonals of $X^n = X\times X\times \ldots \times X$. We prove that at least we cannot contract all the diagonals simultaneously.

\begin{Theorem}\label{thm:multidiagonal}
For every smooth projective variety $X$ of $\dim X > 0$ and every $n\ge 3$, there does not exist a birational projective morphism
\[
\begin{tikzcd}
X^n = \underbrace{X\times X\times \cdots\times X}_n \ar{r}{f} & Y
\end{tikzcd}
\]
such that $f$ maps $\Delta_{ij}$ to a point for all $1\le i<j\le n$, where 
\[\Delta_{ij} = \{p\in X^n: \pi_i(p) = \pi_j(p)\}\]
with $\pi_i:X^n\to X$ being the $i$-th projection.
\end{Theorem}

\begin{Remark}
The above was instigated by the following question of Leon Takhtajan: given a smooth projective
curve $C$ and $n\geq2$, let $\Delta$ be the \textit{total diagonal} in the $n$-fold product
$C^n=C\times \cdots \times C$, i.e., the union of all multidiagonals. Does $C^n\setminus\Delta$ admit any
non-constant holomorphic functions? 
This would follow if all multidiagonals contracted to a subvariety of codimension two or larger.
\end{Remark}

\textbf{Notation:} For the remainder of the paper we will simplify notation and denote by $A=\Alb(X)$ and $\alpha=\alb$. We work over the complex numbers $\CC$ throughout, and varieties are reduced and irreducible.

\textbf{Acknowledgements:}
The second author heard the problem of when the diagonal of a variety might be contractible from
Robert Lazarsfeld. We would like to thank Gebhard Martin for a helpful conversation regarding
Proposition~\ref{prop:kappa=1} and the anonymous referee for spotting a mistake in a previous
version of Theorem~\ref{thm:diagonal contraction}. 

The first author is partially supported by Discovery Grant
RGPIN-2019-04775 from the Natural Sciences and Engineering Research Council of Canada. Both authors
would like to thank the ERC Synergy Grant HyperK (ID 854361) for its support, in particular for the
visit of the first author to the University of Bonn.

\section{Low dimensional examples}\label{sec:examples}

We first introduce two notions of non-degeneracy for subvarieties of abelian varieties that will feature throughout this section.

\begin{Definition}\label{def:nondeg}
An irreducible subvariety $Y\subset A$ of an abelian variety $A$ is \textit{non-degenerate} if it is not contained in any translate of a proper abelian subvariety of $A$, or equivalently, if the smallest abelian subvariety of $A$ containing $Y-y$ for some (equivalently, for a general) $y\in Y$ is $A$ itself.
\end{Definition}

\begin{Definition}{\cite[D\'efinition VIII.2.4]{debarretores}}\label{def:geodeg}
An irreducible subvariety $Y\subset A$ of an abelian variety $A$ is \textit{geometrically non-degenerate} if for every abelian subvariety $K\subset A$,
\[
\dim(Y+K)=\min(\dim Y+\dim K,\,\dim A).
\]
\end{Definition}
Equivalently, for every abelian quotient $\pi:A\to A'$ we have $\dim\pi(Y) = \min(\dim Y, \dim A')$. By \cite[Lemma II.12]{ransubvarieties}, this is also equivalent to the requirement that the kernel of the pullback map 
\[i^*\colon H^0(A,\Omega^d_A)\to H^0(\widetilde Y,\Omega^d_{\widetilde Y})\] contains no nonzero decomposable $d$-form, where $i\colon \widetilde Y\to Y\subset A$ is a resolution of singularities and $d=\dim Y$.

Geometric non-degeneracy implies non-degeneracy: if $Y\subset a+B$ for a proper abelian subvariety $B\subsetneq A$, then $Y+B\subset a+B$, so $\dim(Y+B)\le\dim B<\dim Y+\dim B$, a contradiction. The converse fails, as Examples \ref{ex:BxC} and \ref{ex:C1C2} below show. More importantly, the right condition for contractibility of the diagonal is (C): it is implied by geometric non-degeneracy (when $\dim A\ge 2\dim Y$), but does not follow from non-degeneracy alone. By the universal property of the Albanese, $\alpha(X)\subset A$ is always non-degenerate. For more on (geometrically) non-degenerate subvarieties of (semi)abelian varieties, see \cite{ransubvarieties,abramovichsubvarieties} as well as \cite[Chapter VIII]{debarretores}.

A third, strictly weaker notion of non-degeneracy is studied by Ran \cite[Definition II.1]{ransubvarieties}: $Y$ is \textit{nondegenerate in the sense of Ran} if its Gauss image lies on no Plücker hyperplane, or equivalently if the pullback $i^*\colon H^0(A,\Omega^d_A)\to H^0(\widetilde Y,\Omega^d_{\widetilde Y})$ is injective. This is implied by geometric non-degeneracy and we will not explore it further here.

The following sufficient condition will be the workhorse for verifying contractibility in the examples below: it reduces the problem to checking geometric non-degeneracy of the Albanese image. The key input is the following theorem of Debarre, which we will also use later in Section~\ref{sec:proofs} for the proof of (C) $\Rightarrow$ (B) in Theorem~\ref{thm:diagonal contraction}.

\begin{Theorem}[{\cite[Théorème VIII.2.2]{debarretores}}]\label{thm:debarre22}
Let $A$ be a complex torus, $X_1,X_2\subset A$ irreducible subvarieties, and
let $B$ be the largest subtorus of $A$ such that $X_1+X_2+B=X_1+X_2$. Write
$\pi:A\to A/B$ for the canonical surjection. Then
\[
\dim\bigl(\pi(X_1)+\pi(X_2)\bigr)=\dim \pi(X_1)+\dim \pi(X_2).
\]
\end{Theorem}

The following lemma is a consequence of the above which we will use repeatedly.

\begin{Lemma}\label{lem:dimYminusY}
	Let $Y\subset A$ be an irreducible subvariety of an abelian variety. Let $B:=\mathrm{Stab}^0(Y-Y)\subset A$ be the largest abelian subvariety with $(Y-Y)+B=Y-Y$, and let $\pi\colon A\to A/B$ be the quotient. Then
	\[
	\dim(Y-Y)=2\dim\pi(Y)+\dim B.
	\]
\end{Lemma}
\begin{proof}
	The subvariety $-Y\subset A$ is irreducible of dimension $\dim Y$, and $\pi(-Y)=-\pi(Y)$. Applying Theorem~\ref{thm:debarre22} to $X_1:=Y$, $X_2:=-Y$ gives $\dim(\pi(Y)-\pi(Y))=2\dim\pi(Y)$. Since $Y-Y$ is $B$-invariant by definition of $B$, we have $Y-Y=\pi^{-1}(\pi(Y-Y))$ and $\pi(Y-Y)=\pi(Y)-\pi(Y)$, so
	\[
	\dim(Y-Y)=\dim(\pi(Y)-\pi(Y))+\dim B=2\dim\pi(Y)+\dim B.\qedhere
	\]
\end{proof}

\begin{Proposition}\label{prop:geomnondeg-contracts}
	Let $X$ be a smooth projective variety with $\dim X=n$, and let $\alpha\colon X\to A=\Alb(X)$ be its Albanese morphism with image $Y$. Assume that 
	\begin{itemize}
		\item $\alpha$ is generically finite onto $Y$, 
		\item $Y$ is geometrically non-degenerate in $A$, 
		\item $q(X)\ge 2n$. 
	\end{itemize}  
	Then the conditions of Theorem~\ref{thm:diagonal contraction} hold; in particular, the diagonal of $X$ is contractible to a point.
\end{Proposition}
\begin{proof}
	It suffices to verify condition (B), namely $\dim(Y-Y)=2n$. The upper bound $\dim(Y-Y)\le 2n$ is immediate since $Y-Y$ is the image of $Y\times Y$ under the difference map. For the lower bound, let $B$ and $\pi\colon A\to A/B$ be as in Lemma~\ref{lem:dimYminusY}, so that $\dim(Y-Y)=2\dim\pi(Y)+\dim B$. Geometric non-degeneracy of $Y$ at $B$ gives $\dim\pi(Y)=\min(n,\,q-\dim B)$. If $\dim B\le q-n$ then $\dim(Y-Y)=2n+\dim B\ge 2n$; otherwise $\dim(Y-Y)=2q-\dim B\ge q\ge 2n$. In either case $\dim(Y-Y)=2n$.
\end{proof}

Turning now to the topic at hand, for which varieties is the diagonal contractible, the primary examples satisfying condition (C) are geometrically non-degenerate subvarieties $X\subset A$ of abelian varieties with $\dim A\ge 2\dim X$; see Example~\ref{ex:CI}. 

We note that the conditions in Theorem~\ref{thm:diagonal contraction} are invariant under birational morphisms from other smooth projective varieties, so we may as well try to classify minimal varieties with this property. 

We move now to some more precise statements in low dimensions, starting with curves.

\subsection{Curves}\label{subsection:curves} Let $C$ be a smooth projective curve. Then for $C=\Delta_C\subset C\times C$ the diagonal with normal bundle $N_{C/C\times C}$ we have 
\[\deg N_{C/C\times C} = \deg T_C = 2-2g.\]
In particular, if $g\leq1$ then $\Delta_C$ cannot be contracted by Grauert's contractibility criterion \cite[p.91, Theorem 2.1]{BHPV}. If $g\geq2$, then the difference map 
\begin{align*}
	\delta\colon C\times C &\to J(C) \\
	(p,q) &\mapsto \OO(p-q)
\end{align*}
always contracts $\Delta_C$ to a point, and is an isomorphism onto its image outside $\Delta_C$ if $C$ is not hyperelliptic, whereas it will have degree 2 if $C$ is hyperelliptic (see, e.g., \cite[p.223, Exercise D]{acgh1}). Hence, taking the Stein factorisation in the latter case, we see that the diagonal of a curve is contractible if and only if $g\geq2$. 

As we will see below in Lemma~\ref{lem:amplecotangent}, the difference map to the Jacobian is essentially the only morphism contracting the diagonal, in that any other birational morphism to a surface $f\colon C\times C\to S$ where $f(\Delta_C)$ is a point factors through the Stein factorisation of the difference map.

\subsection{Surfaces} 
Let $S$ be a smooth projective surface. From Theorem~\ref{thm:diagonal contraction}, the diagonal of
$S\times S$ is contractible to a point if and only if
the difference map $S\times S\to \Alb(S)$ has 4-dimensional image, which implies
\[\dim \alpha(S) = 2 \quad\text{and}\quad q(S)\ge 4.\]

In particular this is not possible if the Kodaira dimension $\kappa(X)\leq0$. Examples where it does happen are products of curves of genus at least two or more generally non-degenerate 2-dimensional subvarieties of abelian varieties of dimension at least 4. Here is a small classification result for surfaces of Kodaira dimension at most $1$ satisfying the numerical necessary conditions $q>2\dim S$ and generically finite Albanese.
\begin{Proposition}\label{prop:kappa=1}
	Let $S$ be a minimal smooth projective surface with Kodaira dimension $\kappa(S)\leq1$. Then
        condition \eqref{eq:contraction2point2} holds (i.e., $q(S)\geq4$ and the Albanese morphism of $S$ is generically
        finite) if and only if $S\cong (C\times E)/G$ for $G\subset E$ a finite subgroup acting diagonally on the product, where $E, C$ are smooth projective curves of genus $1$ and $g\geq3$ respectively.
\end{Proposition}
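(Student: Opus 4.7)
The plan is to apply Theorem \ref{thm:diagonal contraction} in both directions, reducing contractibility of the diagonal to $\dim\alpha(S)=2$ together with $q(S)\ge 4$. For $(\Leftarrow)$, assume $S=(C\times E)/G$ as in the statement. Since $G\subset E$ acts on $E$ by fixed-point-free translations, the diagonal action on $C\times E$ is free, so $S$ is smooth; finite étale quotients preserve Kodaira dimension, giving $\kappa(S)=\kappa(C\times E)=1$. Because translations act trivially on $H^0(E,\Omega^1_E)$, one obtains $H^0(S,\Omega^1_S)=H^0(C,\Omega^1_C)^G\oplus H^0(E,\Omega^1_E)$, hence $q(S)=g(C/G)+1\ge 4$ under the stated genus hypothesis, while maximal Albanese dimension is verified by pulling back to $C\times E$, where the invariant $1$-forms generate the cotangent space at a general point.

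For $(\Rightarrow)$, assume $S$ minimal, $\kappa(S)\le 1$, and the diagonal contractible, so $\dim\alpha(S)=2$ and $q(S)\ge 4$. We first eliminate $\kappa(S)\in\{-\infty,0\}$: if $\kappa(S)=-\infty$ then $S$ is ruled and $\alpha$ factors through the base of the ruling, forcing $\dim\alpha(S)\le 1$; if $\kappa(S)=0$, the Enriques--Kodaira classification gives $q(S)\le 2$ (with values $0,0,2,1$ for K3, Enriques, abelian, and bielliptic surfaces). Both cases contradict our hypotheses, so $\kappa(S)=1$ and the Iitaka fibration $f\colon S\to B$ is elliptic with general fibre an elliptic curve $E_b$. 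The crucial step is to prove that $f$ is isotrivial: the restriction $\alpha|_{E_b}\colon E_b\to A$ is a morphism of abelian varieties up to translation, so either constant (in which case $\alpha$ factors through $f$ and $\dim\alpha(S)\le 1$, a contradiction) or an isogeny onto a $1$-dimensional abelian subvariety of $A$. Since $A$ contains countably many $1$-dimensional abelian subvarieties and each admits countably many isogenous elliptic curves, the map $b\mapsto j(E_b)$ on the smooth locus takes values in a countable subset of $\CC$ and is thus constant on this irreducible variety.

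So $f$ is isotrivial, and the standard classification of isotrivial elliptic fibrations yields $S\cong(C\times E)/G$ for a finite Galois cover $C\to B$ with group $G$ acting diagonally on $C\times E$. It remains to show $G\subset E$: if some $g\in G$ acted as a non-translation automorphism of $E$, it would act by a nontrivial root of unity on $H^0(E,\Omega^1_E)\cong\CC$, giving $H^0(E,\Omega^1_E)^G=0$, so every global $1$-form on $S$ would pull back to a form coming purely from $C$ and would span at most a $1$-dimensional subspace of the cotangent space at a general point of $C\times E$, contradicting maximal Albanese dimension. The final genus bound $g(C)\ge 3$ follows from $q(S)=g(C/G)+1\ge 4$ and Riemann--Hurwitz. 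The main obstacle is making the isotriviality step precise via the countability argument.
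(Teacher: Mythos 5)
Your proposal is correct in outline but takes a genuinely different route from the paper in the key step of the forward direction. The paper cites Katsura--Ueno to see that no fibre of the elliptic fibration is contracted by the Albanese map (and that $\dim\Alb(S)=g+1$), deduces that all degenerate fibres are multiple fibres with smooth reduction, passes to the Jacobian fibration and cites Cossec--Dolgachev--Liedtke to conclude it is smooth, hence isotrivial, and then obtains an elliptic curve inside $\mathrm{Bir}(S)$ and invokes Fong or Rudakov--\v{S}afarevi\v{c} for the structure $S\cong(C\times E)/G$. You instead extract isotriviality directly from maximal Albanese dimension: general fibres are not contracted by $\alpha$, hence map isogenously onto one of the countably many elliptic subgroups of $A$, so the $j$-map has countable (hence constant) image; you then use the standard (Serrano-type) structure theorem for isotrivial fibrations plus the invariant $1$-form argument to force $G$ to act on $E$ by translations. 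Your route is more elementary and self-contained (no Katsura--Ueno, no smoothness of the relative Jacobian, no appeal to the classification of surfaces with an elliptic curve of birational automorphisms), and it uses exactly the hypothesis $\dim\alpha(S)=2$; the paper's route yields finer information about the fibration (only multiple fibres, the exact value of $\dim\Alb(S)$) at the cost of heavier citations. The countability argument is sound over $\CC$: the image of the $j$-map is constructible and irreducible, so countable forces constant.

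Two caveats. First, the isotriviality structure theorem only gives $S$ \emph{birational} to $(C\times E)/G$ with $G$ acting faithfully on both factors; you assert the isomorphism before knowing $G\subset E$. The fix is easy but should be stated: run your translation argument on the birational model (it only uses the birational invariants $q$, $H^0(\Omega^1)$ and Albanese dimension, via $H^0(\Omega^1_S)=H^0(\Omega^1_{C\times E})^G$), conclude $G$ acts freely, so $(C\times E)/G$ is a smooth \'etale quotient containing no rational curves, hence minimal, and then uniqueness of minimal models in Kodaira dimension $\ge0$ gives $S\cong(C\times E)/G$. Second, in the ``if'' direction your own formula $q(S)=g(C/G)+1$ shows that $g(C)\ge3$ alone does not give $q\ge4$: for instance a genus-$3$ hyperelliptic $C$ with $G=\ZZ/2$ acting by the hyperelliptic involution on $C$ and by a $2$-torsion translation on $E$ gives $q(S)=1$, so ``under the stated genus hypothesis'' is a non sequitur. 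The paper's one-line computation $h^1=g+1$ makes the same identification, so this is looseness in the statement itself (the correct condition is $g(C/G)\ge3$, equivalently $q(S)\ge4$, which in the ``only if'' direction does imply $g(C)\ge3$ by Riemann--Hurwitz, as you note); still, your write-up should flag it rather than derive it.
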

\begin{proof}
	One direction follows easily by computing
	\[ h^1((C\times E)/G, \OO_{(C\times E)/G})=g+1. \]
	Since the Albanese of $(C\times E)/G$ is $C'\times E$ (where $C'$ is the quotient of $C$ by the $G$-action on $C$) and the Albanese morphism is generically finite, the numerical conditions hold.

	For the other, note that from the classification of surfaces and Theorem~\ref{thm:diagonal contraction}, we have already ruled out $\kappa(S)\leq0$. If $\kappa(S)=1$, then $S$ admits an elliptic fibration
	\[ 
		\begin{tikzcd}
				f\colon S\ar[r] & C
		\end{tikzcd}
	\]
	to a smooth projective curve. From \cite[Lemma 3.4]{katsuraueno}, we obtain that $\dim\Alb(S)=g+1$ and that no fibre of $f$ gets contracted under the Albanese morphism. In particular, $f$ cannot contain any rational curves in its fibres (which would necessarily be contracted to $\Alb(S)$) and has only multiple fibres. Consider 
	\begin{equation*}
		\begin{tikzcd}
			J \arrow[d, "h"] \\
			C \arrow[r, "\psi"] & \overline{\mathcal{M}}_{1,1}
		\end{tikzcd}
	\end{equation*}
	 the corresponding Jacobian fibration of $f$ and the induced moduli map $\psi$. From \cite[Theorems 4.3.1, 4.3.20]{cossecdolgachevliedtke}, $h$ must be a smooth fibration. In particular $\psi$ is a contraction and it follows that $h$ must be isotrivial. This implies that all smooth fibres of $f$ are isomorphic. It follows that there is an elliptic curve $E\subset\mathrm{Bir}(S)$. The result now follows from \cite[Proposition 3.26]{fong} or \cite[\S 5]{rudakovshafarevichinseparable}.
\end{proof}

The surfaces $(C\times E)/G$ appearing in Proposition~\ref{prop:kappa=1} do \emph{not} satisfy condition (C) of Theorem~\ref{thm:diagonal contraction}, and in particular their diagonal is not contractible to a point. Indeed, the Albanese image $Y\cong C'\times E\subset\Alb(S)$ contains the elliptic factor $E$, so the abelian subvariety $B=\{0\}\times E$ satisfies $\dim B=1$ and $\dim((B+a)\cap Y)=1$ for any $a$, violating the requirement $\dim B\ge 2\dim((B+a)\cap Y)$ in condition (C). This is the same phenomenon illustrated in Example~\ref{ex:BxC}.

\begin{Corollary}
	Let $S$ be a smooth projective surface whose diagonal is contractible to a point. Then $S$ is of general type.
\end{Corollary}

Now we may ask the question when or whether it is possible at all that the diagonal of $S\times S$ is contractible to a curve.
From Theorem~\ref{thm:contract2variety}, the necessary and sufficient conditions for this to happen are
\begin{enumerate}
	\item there exists a projective dominant morphism to a curve $S\to W$,
	\item $\dim \alpha(S_p)=1$ for $p\in W$ general,
	\item $\alpha(S_p)$ is not an elliptic curve for $p\in W$ general.
\end{enumerate}
For example, if $C_1,C_2$ are two general curves of genus at least $2$, then the diagonal of $C_1\times C_2$ is contractible to a curve.

We end with a remark in a slightly different direction. In \cite{lehmannottem}, positivity properties of the cycle $[\Delta_X]\in\CH_n(X\times X)$ are discussed for surfaces. In particular, they study when $[\Delta_X]$ is big or nef. They prove for example that if $X$ is a K3 surface, then $[\Delta_X]$ is neither big nor nef, and that if $X$ is very general, then $\Delta_X$ is the unique effective cycle in its numerical class, so is extremal in the pseudo-effective cone of cycles. Nevertheless, by our theorem above, it cannot be contracted to a point.

\subsection{Higher dimension}\label{subsection:higherdim}

Other than high-codimensional non-degenerate subvarieties of abelian varieties, the following is easy to see and is the prototypical example in all dimensions of interesting contraction behaviour for the diagonal.

\begin{Lemma}\label{lem:productofcurves}
	Let $C_1,\ldots,C_n$ be general curves of genus $g_1,\ldots,g_n$ so that $g_i\geq2$ for all $i$. Then for any $0\leq t\leq n-1$, the diagonal $\Delta_{C_1\times\ldots\times C_n}\subset C_1\times\ldots\times C_n\times C_1\times\ldots\times C_n$ is contractible to a variety of dimension $t$.
\end{Lemma}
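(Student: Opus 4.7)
The plan is to apply Theorem \ref{thm:contract2variety} with the obvious projection. Set $X = C_1\times\cdots\times C_n$ and take $\pi: X \to W := C_1\times\cdots\times C_t$ (taking $W$ to be a point if $t=0$) to be projection onto the first $t$ factors. Every fibre is isomorphic to $X_p \cong C_{t+1}\times\cdots\times C_n$, of dimension $n-t$, so this gives a dominant morphism to a variety of dimension $t$.

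I would then verify the two fibre conditions of Theorem \ref{thm:contract2variety}. Maximal Albanese dimension of $X_p$ is immediate: since each $g_i\geq 2$, the Abel-Jacobi map $\alpha_i: C_i \hookrightarrow J(C_i)$ is a closed immersion, so the Albanese morphism of $X_p$, which is the product of the $\alpha_i$ for $i>t$, is a closed immersion, and in particular $\dim X_p = \dim \alpha(X_p)$.

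The main step is the subvariety condition. Identify $\Alb(X) = \prod_{i=1}^n J(C_i)$ with Albanese map $\alpha$ given by the product of Abel-Jacobi maps (for a fixed choice of base points). Then $\alpha(X_p)$ lies in the coset $(\alpha_1(p_1),\ldots,\alpha_t(p_t),0,\ldots,0) + \{0\}^t\times \prod_{i>t}\alpha_i(C_i)$. Suppose that $\alpha(X_p)\subset B + b$ for some abelian subvariety $B\subset \Alb(X)$ and some $b$. The set of differences $\alpha(X_p) - \alpha(X_p)$ then lies in $B$, and since $\alpha_i(C_i) - \alpha_i(C_i)$ generates $J(C_i)$ as a group (a standard fact about Jacobians), the subgroup generated by these differences is exactly $\{0\}^t\times \prod_{i>t}J(C_i)$. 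Hence $B\supset \{0\}^t\times \prod_{i>t}J(C_i)$, and so
\[
\dim B \;\geq\; \sum_{i>t} g_i \;\geq\; 2(n-t) \;=\; 2\dim X_p.
\]
Therefore no abelian subvariety of dimension less than $2\dim X_p$ has a translate containing $\alpha(X_p)$, and Theorem \ref{thm:contract2variety} produces a contraction of the diagonal to a variety of dimension $t$.

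There is no serious obstacle: the argument uses only the product decomposition of the Albanese and the generation property of Abel-Jacobi. The hypothesis $g_i\geq 2$ is exactly what makes the sum $\sum_{i>t} g_i$ meet the bound $2\dim X_p$, while the genericity assumption on the $C_i$ does not appear to be needed for the argument.
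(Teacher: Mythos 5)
Your argument is correct and is the natural application of Theorem \ref{thm:contract2variety}, which is evidently what the authors intend since they state the lemma as ``easy to see'' without proof: projecting onto the first $t$ factors, noting the fibre $C_{t+1}\times\cdots\times C_n$ embeds into $\Alb(X)$ via the Abel--Jacobi maps, and observing that any abelian subvariety a translate of which contains its image must contain $\{0\}^t\times\prod_{i>t}J(C_i)$, hence has dimension $\sum_{i>t}g_i\geq 2(n-t)=2\dim X_p$, is exactly the required verification. Your closing remark is also accurate: the argument uses only $g_i\geq 2$, not the genericity of the curves.
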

\begin{proof}
We use Theorem~\ref{thm:diagonal contraction} for $t=0$ and Theorem~\ref{thm:contract2variety} for $0<t\le n-1$.

\textit{Case $t=0$.} Set $X=C_1\times\cdots\times C_n$ and $A=J(C_1)\times\cdots\times J(C_n)$. The map $\lambda\colon X\times X\to A$ factors as a product of difference maps $\delta_i\colon C_i\times C_i\to J(C_i)$, $(x,y)\mapsto\alpha_i(x)-\alpha_i(y)$:
\[
\lambda\bigl((x_1,\ldots,x_n),(y_1,\ldots,y_n)\bigr)=\bigl(\delta_1(x_1,y_1),\ldots,\delta_n(x_n,y_n)\bigr).
\]
Hence $\lambda(X\times X)=(C_1-C_1)\times\cdots\times(C_n-C_n)$. Since $g_i\ge2$, the difference variety $C_i-C_i\subset J(C_i)$ has dimension $2$. Therefore $\dim\lambda(X\times X)=2n=2\dim X$, condition (B) holds, and the diagonal is contractible to a point.

\textit{Case $0<t\le n-1$.} By Theorem~\ref{thm:contract2variety}, it suffices to find a dominant morphism $\pi\colon X\to W$ with $\dim W=t$ such that $\lambda|_{X_p\times X_p}$ is generically finite onto its image for a general fibre $X_p$. Take $\pi$ to be the projection onto the first $t$ factors $C_1\times\cdots\times C_t$. Then $X_p\cong C_{t+1}\times\cdots\times C_n$, and the $t=0$ case applied to $C_{t+1},\ldots,C_n$ (each with $g_i\ge2$) gives $\dim\lambda(X_p\times X_p)=2(n-t)=2\dim X_p$, as required.
\end{proof}

It seems natural to wonder what the relation of contractibility of the diagonal and positivity
properties of the vector bundle $\Omega^1_X$ is. In the following, we provide some results in both
directions.

Even though we do not use it, we leave the following well-known lemma here to highlight further the
relevance of positivity properties of $\Omega^1_X$.

\begin{Lemma}
	Let $X$ be a smooth projective variety. Then the Albanese morphism $\alpha\colon X\to A$ is unramified (in particular finite) if and only if $\Omega^1_X$ is globally generated.
\end{Lemma}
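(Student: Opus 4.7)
The plan is to identify the evaluation map $H^0(X,\Omega^1_X)\otimes\OO_X\to\Omega^1_X$ with the codifferential $d\alpha^\vee\colon\alpha^*\Omega^1_A\to\Omega^1_X$ of the Albanese morphism, and then conclude using the standard criterion for unramifiedness in terms of the sheaf of relative differentials.

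First I would use that $\Omega^1_A$ is trivial on the abelian variety $A$, so that the evaluation $H^0(A,\Omega^1_A)\otimes\OO_A\xrightarrow{\sim}\Omega^1_A$ is an isomorphism; pulling back along $\alpha$ gives an isomorphism $H^0(A,\Omega^1_A)\otimes\OO_X\xrightarrow{\sim}\alpha^*\Omega^1_A$. Next, the defining universal property of the Albanese provides an isomorphism $\alpha^*\colon H^0(A,\Omega^1_A)\xrightarrow{\sim} H^0(X,\Omega^1_X)$. These two ingredients assemble into the commutative square
\[
\begin{tikzcd}
H^0(A,\Omega^1_A)\otimes\OO_X \ar[r,"\sim"] \ar[d,"\sim"'] & \alpha^*\Omega^1_A \ar[d,"d\alpha^\vee"] \\
H^0(X,\Omega^1_X)\otimes\OO_X \ar[r,"\mathrm{ev}"] & \Omega^1_X
\end{tikzcd}
\]
so that the evaluation map $\mathrm{ev}$ and the codifferential $d\alpha^\vee$ have the same image inside $\Omega^1_X$.

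The right-exact sequence $\alpha^*\Omega^1_A\to \Omega^1_X\to\Omega^1_{X/A}\to 0$ then shows that $d\alpha^\vee$ is surjective if and only if $\Omega^1_{X/A}=0$, i.e.\ if and only if $\alpha$ is unramified. Combining with the commutative square, this is exactly the condition that $\Omega^1_X$ be generated by its global sections. For the parenthetical finiteness assertion, since $X$ is projective, $\alpha$ is proper; an unramified morphism is locally quasi-finite, and a proper locally quasi-finite morphism is finite.

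There is no serious obstacle: the argument is a formal consequence of the universal property of $\Alb(X)$ together with the triviality of $\Omega^1_A$. The only point to check is that the square above genuinely commutes, which is immediate from the naturality of the evaluation map under pullback of coherent sheaves.
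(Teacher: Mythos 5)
Your argument is correct: identifying the evaluation map $H^0(X,\Omega^1_X)\otimes\OO_X\to\Omega^1_X$ with the codifferential $\alpha^*\Omega^1_A\to\Omega^1_X$ via the triviality of $\Omega^1_A$ and the isomorphism $\alpha^*\colon H^0(A,\Omega^1_A)\xrightarrow{\sim}H^0(X,\Omega^1_X)$, and then invoking the right-exact sequence $\alpha^*\Omega^1_A\to\Omega^1_X\to\Omega^1_{X/A}\to 0$ together with proper plus quasi-finite implies finite, is exactly the standard proof. The paper states this lemma as well known and gives no proof of its own, so there is nothing to compare beyond noting that your write-up supplies the expected argument with no gaps.
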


\begin{Remark}\label{rem:artin} 
In view of results in \cite{artinalgebraization2} one could hope
that contractibility of the diagonal to a point could be related to $N_{\Delta/X\times
X}^*\cong\Omega^1_X$ being ample. This is only true in the divisor case, namely when $\dim X=1$ as
Section~\ref{subsection:curves} shows. In higher dimensions, ampleness of $\Omega^1_X$ is neither
necessary nor sufficient for contractibility. For necessity: a product $X=C_1\times C_2$ of two
curves of genus $g_i\geq2$ has
$\Omega^1_X=\mathrm{pr}_1^*\Omega^1_{C_1}\oplus\mathrm{pr}_2^*\Omega^1_{C_2}$ globally generated but
not ample (each summand restricts trivially along fibres of the other projection), yet the diagonal
of $X$ is contractible to a point by Lemma~\ref{lem:productofcurves}. For sufficiency: a
sufficiently general complete intersection of high degree and high codimension in projective space
has $\Omega^1_X$ ample yet $q=0$, so the diagonal cannot be contracted to a point.
\end{Remark}

In the other direction, we record a Hodge-theoretic sufficient condition for contractibility of the diagonal, of Castelnuovo--de Franchis type.
\begin{Proposition}\label{prop:cdf-higherdim}
	Let $X$ be a smooth projective variety of dimension $n$ with $q(X)\ge 2n$. Assume that for every $n$ linearly independent global $1$-forms $\omega_1,\dots,\omega_n\in H^0(\Omega^1_X)$, the product $\omega_1\wedge\dots\wedge\omega_n\in H^0(K_X)$ is nonzero. Then the conditions of Theorem~\ref{thm:diagonal contraction} hold; in particular, the diagonal of $X$ is contractible to a point.
\end{Proposition}
\begin{proof}
Note that $h^0(\Omega_X^1\otimes I_x) \ge h^0(\Omega_X^1) - n \ge n$. Then \eqref{eq:contraction2point4} follows.
\end{proof}

We list now three key examples attempting to give some insight into how contractibility is related to
non-degeneracy and positivity.

\begin{Example}\label{ex:BxC}
Let $B$ be an abelian variety with $\dim B\ge 1$ and $C$ a curve of genus $g\ge 2$. Set $X=B\times C$, so $A=\Alb(X)=B\times J(C)$ and $Y:=\alpha(X)=B\times C\subset A$.

The subvariety $Y$ is non-degenerate (it generates $A$), but it is not geometrically non-degenerate: for $K=B\times\{0\}$, one has $Y+K=Y$, so $\dim(Y+K)=\dim B+1$ while $\min(\dim Y+\dim K,\dim A)=\min(2\dim B+1,\dim B+g)>\dim B+1$ for $\dim B\ge 1$.

Moreover, $Y$ fails condition (C): for any nontrivial abelian subvariety $B'\subset B$, the general fibre of $Y\to A/(B'\times\{0\})$ is $B'\times\{{\rm pt}\}$ of dimension $\dim B'$, so condition (C) would require $\dim B'\ge 2\dim B'$, a contradiction. Consequently $\dim(Y-Y)=\dim B+2<2\dim B+2=2\dim Y$, condition (B) fails, and the diagonal is not contractible. The obstruction is the positive-dimensional stabiliser $\mathrm{Stab}^0(Y)=B$.
\end{Example}

\begin{Example}\label{ex:C1C2}
Let $C_1,C_2$ be curves of genera $g_1,g_2\ge 2$. Set $X=C_1\times C_2$, $A=J(C_1)\times J(C_2)$, and $Y=C_1\times C_2\subset A$.

The subvariety $Y$ satisfies conditions (B) and (C) (as established in Lemma \ref{lem:productofcurves}), so in particular it is non-degenerate with $\mathrm{Stab}^0(Y)=0$.

However, it is \emph{not} geometrically non-degenerate: consider the projection $\pi\colon A\to J(C_1)$ with kernel $K=\{0\}\times J(C_2)$. The image $\pi(Y)$ is $C_1$, which has dimension 1. Since $1 \neq \dim J(C_1)$ (as $g_1 \ge 2$) and $1 \neq \dim Y$ (which is 2), $Y$ fails the definition of geometric non-degeneracy. This example shows condition (C) is strictly weaker than geometric non-degeneracy.
\end{Example}

\begin{Example}\label{ex:CI}
Complete intersections of ample divisors $Y=H_1\cap\cdots\cap H_r\subset A$ of codimension $r\ge\tfrac{1}{2}\dim A$ are geometrically non-degenerate: for any abelian subvariety $K\subset A$ of dimension $k$, ampleness ensures that a general translate of $K$ meets $Y$ with the expected dimension $\max(0,k-r)$, so the image of $Y$ under $A\to A/K$ has dimension $\min(\dim Y, \dim A/K)$ and $\dim(Y+K)=\min(\dim Y+k,\dim A)$. Since $\dim A\ge 2\dim Y$, condition (C) follows, and such $X\subset A$ (with $\alpha$ the inclusion) have contractible diagonal.
\end{Example}

\subsubsection{Factoring through the difference map}\label{subsubsection:factoringthroughdifference} We discuss here Question~\ref{question:throughdifference}. We give an answer in the following, somewhat trivial, case.
\begin{Lemma}\label{lem:amplecotangent}
	Let $X$ be a smooth projective variety satisfying the conditions of Theorem~\ref{thm:isooutsidediagonal}. Then any contraction of the diagonal factors through the difference map to $A$.
\end{Lemma}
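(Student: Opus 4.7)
The plan is to combine the explicit structure of the Stein factorization of $\lambda$ afforded by the hypotheses of Theorem \ref{thm:isooutsidediagonal} with Zariski's Main Theorem. Keeping the notation of \eqref{eq:lambda}, let $\lambda_\St: X\times X\to Y$ be the birational part of the Stein factorization of $\lambda$, so that $Y$ is normal and the remaining morphism $Y\to A$ is finite. Let $f: X\times X\to Y'$ be an arbitrary birational morphism contracting $\Delta_X$ to a point $y_0\in Y'$; the goal is to produce a morphism $\phi: Y\to Y'$ with $f = \phi\circ\lambda_\St$.

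The first step is to show that $\lambda_\St$ contracts $\Delta_X$ to a single point $p\in Y$ and restricts to an isomorphism on $X\times X\setminus\Delta_X$. Conditions \eqref{eq:dim1} and \eqref{eq:dim2}, combined with the finiteness of $\alpha$, imply that $\lambda^{-1}(a)$ is finite for every $a\neq 0$ and that $\lambda^{-1}(0) = X\times_A X$ is the disjoint union of $\Delta_X$ with a finite set of isolated points. Since $Y\to A$ is finite, any positive-dimensional fibre of $\lambda_\St$ would produce a positive-dimensional fibre of $\lambda$, so the only positive-dimensional fibre of $\lambda_\St$ is $\Delta_X$, which must map to a single point $p\in Y$, while all other fibres of $\lambda_\St$ are singletons. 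Zariski's Main Theorem applied to the restriction $\lambda_\St|_{X\times X\setminus\Delta_X}: X\times X\setminus\Delta_X\to Y\setminus\{p\}$ --- which is proper, birational, and quasi-finite with normal target --- then yields the claimed isomorphism.

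Next I would consider the combined morphism $g = (f, \lambda_\St): X\times X\to Y'\times Y$, and let $Z\subset Y'\times Y$ denote its closed image. The projection $\pi_Y: Z\to Y$ is proper, and birational since $\lambda_\St$ is. Over each $y\in Y\setminus\{p\}$ the fibre $\lambda_\St^{-1}(y)$ consists of a single point $x$, so $\pi_Y^{-1}(y) = \{(f(x),y)\}$ is a singleton; over $p$ itself, $\lambda_\St^{-1}(p) = \Delta_X$ and $f(\Delta_X) = y_0$, so $\pi_Y^{-1}(p) = \{(y_0,p)\}$ is also a singleton. Thus $\pi_Y$ is a proper, birational, quasi-finite morphism to the normal variety $Y$, so by a second application of Zariski's Main Theorem it is an isomorphism; composing its inverse with the other projection $Z\to Y'$ produces the desired $\phi$.

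The delicate step is the first one: the isolated points of $X\times_A X\setminus\Delta_X$ lie in the same fibre $\lambda^{-1}(0)$ as $\Delta_X$, so one must verify that they do not create additional positive-dimensional fibres of $\lambda_\St$. Fortunately the connectedness of Stein fibres together with the finiteness of $Y\to A$ forces each such isolated point into its own singleton fibre of $\lambda_\St$, distinct from the one containing $\Delta_X$. Once this structural description of $\lambda_\St$ is secure, the remainder of the argument is a routine application of Zariski's Main Theorem.
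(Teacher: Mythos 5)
Your proof is correct and follows essentially the same route as the paper: both arguments first use the hypotheses of Theorem \ref{thm:isooutsidediagonal} to show that the Stein factorization $\lambda_\St$ of \eqref{eq:lambda} contracts $\Delta_X$ to a single point and has singleton fibres elsewhere (equivalently, is an isomorphism off the diagonal), and then deduce that any contraction of the diagonal factors through it. The only difference is that where the paper cites the rigidity-type statement \cite[Lemma 1.15(b)]{debarrehigherdim} for the factorization step, you reprove it in this special case via the graph morphism $(f,\lambda_\St)$ and Zariski's Main Theorem.
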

\begin{proof}
	The assumptions imply that the Stein factorisation of the difference map $\lambda_{\St}\colon X\times X\to Y$ of \eqref{eq:lambda} contracts the diagonal to a point and is an isomorphism outside the diagonal. The claim now follows from \cite[Lemma 1.15(b)]{debarrehigherdim}.
\end{proof}
In other words, if $f\colon X\times X\to Y'$ is a birational morphism such that $f(\Delta_X)$ is a point, then for $Y$ as in \eqref{eq:lambda} there exists a birational morphism $h\colon Y\to Y'$ so that
\[ 
\begin{tikzcd}
	X\times X \ar[r, "\lambda_\St"] \ar[rr, bend left=30, "f"] & Y \ar[r, "h"] & Y'.
\end{tikzcd}
\]

\section{Proofs of the theorems}\label{sec:proofs}

We will need some preparatory lemmas along the way. They are rather standard but we include proofs for completeness.

\begin{Lemma}\label{lemma:iffsemiample}
	Let $X$ be a smooth projective variety. Then the diagonal $\Delta_X$ is contractible to a variety of dimension $m<\dim X$ if and only if there exists a big and semi-ample divisor $D$ on $X\times X$ such that
	\begin{equation}\label{eq:intersection}
		D^m \Delta_X \ne 0 \quad\text{and}\quad D^{m+1} \Delta_X = 0.
	\end{equation}
\end{Lemma}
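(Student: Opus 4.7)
The plan is to set up a dictionary between birational contractions of $X\times X$ and big, semi-ample divisors on $X\times X$, with the dimension of the image of $\Delta_X$ encoded by intersection-theoretic conditions on the restriction of the divisor to $\Delta_X$. The equivalence then amounts to transporting an ample class back and forth across the contraction.

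For the ($\Rightarrow$) direction, I would start with a birational contraction $\pi\colon X\times X\to Y$ satisfying $\dim \pi(\Delta_X)=m$, pick any ample $L$ on $Y$, and set $D:=\pi^{\ast} L$. Pullback of an ample class under a birational morphism is big, while large multiples of $L$ are base-point free, so $D$ is big and semi-ample. The restriction $D|_{\Delta_X}$ equals the pullback of the ample divisor $L|_{\pi(\Delta_X)}$ on the $m$-dimensional variety $\pi(\Delta_X)$ under $\pi|_{\Delta_X}$. Since $L^{m+1}$ vanishes on $\pi(\Delta_X)$, the projection formula gives $D^{m+1}\cdot\Delta_X=0$; since $L^m$ has strictly positive degree on $\pi(\Delta_X)$, the same formula yields $D^m\cdot\Delta_X\ne 0$, which is exactly \eqref{eq:intersection}.

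For the ($\Leftarrow$) direction, I would start from $D$ big and semi-ample satisfying \eqref{eq:intersection}, and take $k$ sufficiently divisible so that $|kD|$ is base-point free and defines a morphism $\pi\colon X\times X\to Y$; bigness of $D$ forces $\pi$ to be birational onto its image. Writing $kD=\pi^{\ast} L$ for an ample $L$ on $Y$ and setting $m':=\dim \pi(\Delta_X)$, running the forward argument in reverse yields $(D|_{\Delta_X})^{m'}\ne 0$ and $(D|_{\Delta_X})^{m'+1}=0$. Comparing with \eqref{eq:intersection} forces $m'=m$, so $\pi$ is the desired contraction to a variety of dimension $m$.

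The principal subtlety is ensuring that the numerical conditions in \eqref{eq:intersection} truly pin down the geometric dimension of $\pi(\Delta_X)$. This rests on the fact that for a semi-ample divisor the numerical (equivalently, Iitaka) dimension coincides with the dimension of the image under the associated morphism, applied here to $D|_{\Delta_X}$. Once this is invoked, both directions reduce to restating this equivalence for the restriction to $\Delta_X$, with \eqref{eq:intersection} being the exact numerical shadow of $\dim\pi(\Delta_X)=m$.
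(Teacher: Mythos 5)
Your proposal is correct and follows essentially the same route as the paper: pull back an ample divisor under the contraction for one direction, and use the morphism defined by a sufficiently divisible multiple of the big semi-ample divisor $D$ for the other, with the numerical conditions \eqref{eq:intersection} recording $\dim\pi(\Delta_X)$. The paper's proof is just a terser version of the same argument.
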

\begin{proof}
	If there exists a birational $\pi\colon X\times X\to Y$ with $\dim \pi(\Delta_X)=m$, then for any ample divisor $L$ on $Y$, we have that $D = \pi^*L$ is big and semi-ample and that \eqref{eq:intersection} holds.

	Conversely, if there exists a big and semi-ample divisor $D$ on $X\times X$ satisfying \eqref{eq:intersection}, then $|nD|$ gives a birational morphism $X\times X\to Y$ contracting $\Delta_X$ to a variety of dimension $m$ for some $n$. 
\end{proof}

We note that for any smooth projective variety $X$, $\alpha(X)\subset A$ is non-degenerate by the universal property of the Albanese.

\begin{Lemma}\label{lemma:picard group decomposition}
	Let $X$ be a smooth projective variety. Then every $D\in \Pic_\QQ(X\times X)$ can be written as
	\begin{equation}\label{eq:divisor decomposition}
		D = \pi_1^* D_1 + \pi_2^* D_2 - (\alpha\times \alpha)^* B
	\end{equation}
	for some $D_i\in \Pic_\QQ(X)$ and $B\in \Pic_\QQ(A\times A)$ satisfying that
	\begin{equation}\label{eq:orthogonality}
		(\alpha\times \alpha)^* B . \pi_i^*L = 0
	\end{equation}
	for all $L\in \Pic_\QQ(X)$ and $i=1,2$,
	where $\pi_i\colon X\times X\to X$ are the two projections.
\end{Lemma}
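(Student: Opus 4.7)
The plan is to invoke the classical block decomposition of the Picard group of a product. For smooth projective varieties $Y_1,Y_2$, the Seesaw theorem (together with the theorem of the square) yields a natural direct sum decomposition
\[
\Pic(Y_1\times Y_2) = \pi_1^*\Pic(Y_1) \oplus \pi_2^*\Pic(Y_2) \oplus P(Y_1,Y_2),
\]
where $P(Y_1,Y_2) \cong \Hom(\Alb(Y_1),\Pic^0(Y_2))$ is characterized as the subgroup of classes whose restriction to every fibre $\{y\}\times Y_2$ and $Y_1\times\{y\}$ lies in $\Pic^0$, hence is numerically trivial. Tensoring with $\QQ$ preserves this decomposition, so the same statement holds for $\Pic_\QQ$.

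Applying this to both $A\times A$ and $X\times X$, the pullback $(\alpha\times\alpha)^*$ respects the three summands. On the third piece it corresponds, under the identifications $P(A,A)\cong\Hom(A,\Pic^0(A))$ and $P(X,X)\cong\Hom(\Alb(X),\Pic^0(X))$, to precomposition with $\Alb(\alpha)\colon\Alb(X)\to A$ and postcomposition with $\alpha^*\colon\Pic^0(A)\to\Pic^0(X)$. Since $A=\Alb(X)$, the universal property of the Albanese makes $\Alb(\alpha)$ the identity and dually makes $\alpha^*$ an isomorphism; hence the induced map $P_\QQ(A,A)\to P_\QQ(X,X)$ is an isomorphism. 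Given any $D\in\Pic_\QQ(X\times X)$, I would decompose it as $D=\pi_1^*D_1+\pi_2^*D_2+M$ with $M\in P_\QQ(X,X)$, lift $-M$ to $B\in P_\QQ(A,A)$ with $(\alpha\times\alpha)^*B=-M$, and obtain \eqref{eq:divisor decomposition}.

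For the orthogonality \eqref{eq:orthogonality}, the fibre-trivial characterization of $P(A,A)$ gives that $B|_{\{a\}\times A}$ and $B|_{A\times\{a\}}$ lie in $\Pic^0(A)$ for every $a\in A$, hence are numerically trivial on $A$. Pulling back, $(\alpha\times\alpha)^*B$ restricts to a numerically trivial class on every fibre of both projections $\pi_i\colon X\times X\to X$, so the intersection $(\alpha\times\alpha)^*B\cdot\pi_i^*L$ is evaluated fibrewise and vanishes by the projection formula. The only real obstacle is invoking the correct form of the classical block decomposition of $\Pic$ of a product; once that is in hand, the entire argument is a formal diagram chase powered by the universal property of the Albanese.
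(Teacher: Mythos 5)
Your argument is correct in substance, but it takes a genuinely different route from the paper. The paper works Hodge-theoretically: it uses the K\"unneth decomposition of $\HH^{1,1}(X\times X,\QQ)$, the fact that $\alpha^*\colon \HH^1(A,\QQ)\to \HH^1(X,\QQ)$ is an isomorphism of Hodge structures, and the exponential-sequence diagram to descend the cross K\"unneth component from cohomology to $\Pic_\QQ$ and to recognize it as pulled back from $A\times A$. You instead invoke the classical structure theory of the Picard group of a product, $\Pic(Y_1\times Y_2)=\pi_1^*\Pic(Y_1)\oplus\pi_2^*\Pic(Y_2)\oplus P(Y_1,Y_2)$ with $P(Y_1,Y_2)\cong\Hom(\Alb(Y_1),\Pic^0(Y_2))$, and then use the universal property of the Albanese and the duality $\Pic^0(X)\cong\widehat{\Alb(X)}$ to see that $(\alpha\times\alpha)^*$ identifies $P_\QQ(A,A)$ with $P_\QQ(X,X)$; only surjectivity of this map is actually needed to produce $B$ in \eqref{eq:divisor decomposition}. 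This is a legitimate and arguably more algebraic proof; the paper's approach stays entirely inside rational Hodge theory, yours inside the theory of divisorial correspondences. Two small imprecisions are worth fixing. First, the third summand $P(Y_1,Y_2)$ is not characterized by ``restriction to every fibre lies in $\Pic^0$'' (classes in $\pi_i^*\Pic^0(Y_i)$ also have this property, so that condition does not cut out a complement); the correct characterization is triviality of the restrictions to the two axes $Y_1\times\{y_2^0\}$ and $\{y_1^0\}\times Y_2$ through chosen base points, with compatible base points $x_0$ and $\alpha(x_0)$ used to make $(\alpha\times\alpha)^*$ respect the splitting; the $\Pic^0$-on-fibres statement is then a consequence you may use afterwards. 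Second, your last step should be phrased as: $B$ restricts to a class in $\Pic^0_\QQ(A)$ on every fibre of either projection of $A\times A$, hence $(\alpha\times\alpha)^*B$ is numerically trivial on every fibre of $\pi_i$; this fibrewise numerical triviality is exactly the content of \eqref{eq:orthogonality} that is used later in the paper (in the proof of Proposition \ref{prop:irreg}), and your appeal to the projection formula adds nothing beyond it, so state that directly rather than asserting a vanishing of the product class.
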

\begin{proof}
	It follows from the K\"unneth decomposition that
	\begin{equation}\label{eq:hodge}
		\begin{aligned}
			\HH^{1,1}(X\times X, \QQ)  = &\ \pi_1^* \HH^{1,1}(X, \QQ) \oplus \pi_2^* \HH^{1,1}(X, \QQ) \oplus
			\\
			                        & \quad (\HH^1(X,\QQ)\otimes \HH^1(X,\QQ)\cap \HH^{1,1}(X\times X))
		\end{aligned}
	\end{equation}
	where $\HH^{1,1}(Z, \QQ) = \HH^2(Z, \QQ)\cap \HH^{1,1}(Z)$ is the Hodge group of a smooth projective variety $Z$.
	By \eqref{eq:hodge}, we obtain
	\begin{equation}\label{eq:picard}
		\begin{aligned}
			\Pic_\QQ(X\times X)  = &\ \pi_1^* \Pic_\QQ(X) \oplus \pi_2^* \Pic_\QQ(X) \oplus
			\\
			                    & \quad \rho_X^*(\HH^1(X,\QQ)\otimes \HH^1(X,\QQ)\cap \HH^{1,1}(X\times X))
		\end{aligned}
	\end{equation}
	where $\rho_X$ is the surjection $\Pic_\QQ(X\times X) \twoheadrightarrow \HH^{1,1}(X\times X,\QQ)$. 

	We know that the pullback
	\[
		\begin{tikzcd}
			\alpha^*\colon\HH^1(A,\QQ)\ar{r}{\sim} & \HH^1(X,\QQ)
		\end{tikzcd}
	\]
	of $\alpha$ is an isomorphism of Hodge structures.
	Therefore,
	\[
		\begin{aligned}
			  \HH^1(X,\QQ)\otimes \HH^1(X,\QQ)\ \cap\ & \HH^{1,1}(X\times X)                            \\
			  = &\ \HH^1(X,\QQ)\otimes \HH^1(X,\QQ)                                                       \\
			 &\quad \cap \big(\HH^{1,0}(X)\otimes \HH^{0,1}(X)\oplus \HH^{0,1}(X)\otimes \HH^{1,0}(X)\big) \\
			  = &\ (\alpha\times \alpha)^* \big(\HH^1(A,\QQ)\otimes \HH^1(A,\QQ) \\
			 &\quad \cap (\HH^{1,0}(A)\otimes \HH^{0,1}(A)\oplus \HH^{0,1}(A)\otimes \HH^{1,0}(A))\big) \\
			  = &\ (\alpha\times \alpha)^* \big(\HH^1(A,\QQ)\otimes \HH^1(A,\QQ)\cap \HH^{1,1}(A\times A)\big).
		\end{aligned}
	\]
	Combining this with the diagram
	\[
		\begin{tikzcd}
			0 \ar{r} & \frac{\HH^{0,1}(A\times A)}{\HH^1(A\times A, \ZZ)}
			\ar{r} \ar{d}{(\alpha\times \alpha)^*}[left]{\sim}& \Pic(A\times A) \ar{d}{(\alpha\times \alpha)^*} \ar{r}{\rho_A} & \HH^2(A\times A, \ZZ)\ar{d}{(\alpha\times \alpha)^*}\\
			0 \ar{r} & \frac{\HH^{0,1}(X\times X)}{\HH^1(X\times X, \ZZ)}\ar{r} & \Pic(X\times X) \ar{r}{\rho_X} & \HH^2(X\times X, \ZZ)
		\end{tikzcd}
	\]
	we see that
	\[
		\begin{aligned}
			 \quad\rho_X^*\big(\HH^1(X,\QQ)\ \otimes &\ \HH^1(X,\QQ)\cap \HH^{1,1}(X\times X)\big) \\
			 &\ = (\alpha\times \alpha)^* \rho_A^*\big(\HH^1(A,\QQ)\otimes \HH^1(A,\QQ)\cap \HH^{1,1}(A\times A)\big)
		\end{aligned}
	\]
	where $\rho_A$ is the surjection $\Pic_\QQ(A\times A) \twoheadrightarrow \HH^{1,1}(A\times A,\QQ)$.
	Thus, we can rewrite \eqref{eq:picard} as
	\[
		\begin{aligned}
			\Pic_\QQ(X\times X)  = &\ \pi_1^* \Pic_\QQ(X) \oplus \pi_2^* \Pic_\QQ(X) \oplus \\
			&\quad (\alpha\times \alpha)^* \rho_A^*\big(\HH^1(A,\QQ)\otimes \HH^1(A,\QQ)\cap \HH^{1,1}(A\times A)\big).
	\end{aligned}
	\]
	Then \eqref{eq:divisor decomposition} follows. Equality \eqref{eq:orthogonality} holds since \eqref{eq:hodge} is an orthogonal decomposition.
\end{proof}

\begin{proof}[Proof of (A) $\Leftrightarrow$ (B) in Theorem~\ref{thm:diagonal contraction}]
	One direction of the theorem is easy. If \eqref{eq:contraction2point} holds, then there exists a generically finite map $f\colon X\times X\to Y$ such that $\dim f(\Delta_X) = 0$. Namely the map
	\[
		\begin{tikzcd}
			\lambda\colon X\times X \arrow[r, "\alpha\times\alpha"] & A\times A \arrow[r, "d"] & A,
		\end{tikzcd}
	\]
	where $d\colon A\times A\to A$ is \textit{the difference map} $(x,y)\mapsto x-y$, is such a map because $\lambda(\Delta_X) = \{0\}$ and $\lambda$ is generically finite onto its image by \eqref{eq:contraction2point}. In other words, as a contraction of the diagonal to a point we may take the Stein factorisation of $\lambda$.

	The other direction will follow from Proposition~\ref{prop:irreg} proved below.
\end{proof}

\begin{Proposition}\label{prop:irreg}
	For a smooth projective variety $X$ and a birational morphism $f\colon X\times X\to Y$, we have
		\[ \dim(f(\Delta_X))\geq\dim X - \frac{1}{2} \dim \lambda(X\times X). \]
	
	In particular, if the diagonal is contractible to a point, then
	\eqref{eq:contraction2point} must hold.
\end{Proposition}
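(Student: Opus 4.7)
The plan is to argue by contradiction: assume $f\colon X\times X\to Y$ is birational with $m := \dim f(\Delta_X) < n - r$, where $r = \min(q/2, a)$, and derive a numerical impossibility. Setting $D = f^*L$ for $L$ ample on $Y$, Lemma \ref{lemma:iffsemiample} identifies $E := D|_{\Delta_X}$ as a nef divisor on $\Delta_X \cong X$ with numerical dimension $\nu(E) = m$. By Lemma \ref{lemma:picard group decomposition}, I would write $D \equiv \pi_1^*D_1 + \pi_2^*D_2 - (\alpha\times\alpha)^*B$ with $D_i \in \Pic_\QQ(X)$ and $B$ in the mixed part. The key observation is that a mixed class on $A\times A$ restricts to an algebraically trivial divisor on each fibre $\{a\}\times A$ (via the theory of the Poincar\'e bundle), so $D|_{\{x\}\times X} \equiv D_2$ numerically for general $x$. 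Since $f$ is birational, the restriction $f|_{\{x\}\times X}$ has image of dimension $n$, forcing $D_2^n > 0$; together with nefness this shows $D_2$ is big and nef, and symmetrically so is $D_1$. Restricting $D$ to $\Delta_X$ then gives $E = D_1 + D_2 - \alpha^*C$ where $C := \delta^*B \in \NS(A)_\QQ$.

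The case $r = a$ (i.e.\ $a \le q/2$) follows from a direct intersection-theoretic argument: since $D_1 + D_2$ is big and nef on $X$, we have $(D_1 + D_2)^n > 0$; rewriting this as $(E + \alpha^*C)^n$ and expanding, the terms $(\alpha^*C)^k$ for $k > a$ vanish numerically because $\alpha^*C$ factors through $\alpha(X)$ of dimension $a$. Thus $\sum_{k=0}^{a} \binom{n}{k} E^{n-k} \cdot (\alpha^*C)^k > 0$, which forces $E^{n-k} \not\equiv 0$ for some $k \le a$, and hence $\nu(E) \ge n - a = n - r$, contradicting the assumption.

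The case $q/2 < a$, requiring the stronger bound $\nu(E) \ge n - q/2$, is the technical heart of the argument. Here Lemma \ref{lem:debarredifference} ensures that $\lambda = d\circ(\alpha\times\alpha)\colon X\times X\to A$ is surjective, with general fibre of dimension $2n - q$. Considering the product map $(f,\lambda)\colon X\times X\to W\subset Y\times A$, which is generically finite onto its image $W$ of dimension $2n$ because $f$ is birational, upper semi-continuity of fibre dimension applied to $\pi_A\colon W\to A$ yields $\dim f(\lambda^{-1}(0)) \ge 2n - q$. The main obstacle is transferring this bound to $\dim f(\Delta_X)$, since $\lambda^{-1}(0) = X\times_A X$ generally contains components beyond $\Delta_X$; I expect this is handled by localising via the Stein factorisation of $\lambda$ to the connected component of $\lambda^{-1}(0)$ containing $\Delta_X$ and carefully separating the contributions of the other components.
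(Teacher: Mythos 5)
Your argument for the regime $\dim\alpha(X)\le\tfrac12\dim A$ is correct: extracting that $D_1,D_2$ are big and nef from the decomposition of Lemma \ref{lemma:picard group decomposition}, and then expanding $(D_1+D_2)^n=(E+\alpha^*C)^n$ while killing the terms $(\alpha^*C)^k$ for $k>\dim\alpha(X)$ by the projection formula, does give $m=\nu(E)\ge n-\dim\alpha(X)$. This is a clean, somewhat more direct route than the paper's, which instead runs a single argument covering both regimes at once.

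The regime $\tfrac12\dim A<\dim\alpha(X)$ is where your proof has a genuine gap, and it is not a repairable technicality. Semicontinuity applied to $\pi_A\colon W\to A$ bounds from below the dimension of the \emph{whole} fibre $f(\lambda^{-1}(0))$; it says nothing about $f(\Delta_X)$, which need not be an irreducible component of that fibre, and the other components of $X\times_A X$ can carry all of the excess. Worse, the inequality you are trying to transfer, $\dim f(\Delta_X)\ge 2n-q$, is simply false: let $A$ be a general principally polarised abelian fourfold (so $A$ is simple and $\Theta$ smooth), $V=\Theta$, and $X\to V$ the blow-up along the base curve $B$ of a Lefschetz pencil of surfaces on $V$, with pencil map $\pi\colon X\to\PP^1$. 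Then $n=3$, $q=4$, $a=3$, so you are in this regime, yet $\eta(x_1,x_2)=(\pi(x_1),\pi(x_2),\lambda(x_1,x_2))$ is generically finite (its restriction to a general $X_p\times X_p$ is generically finite by Lemma \ref{lem:debarredifference}, since $\alpha(X_p)$ is a non-degenerate surface in the simple fourfold $A$) and contracts $\Delta_X$ to the curve $\Delta_{\PP^1}\times\{0\}$; after Stein factorisation this gives a birational $f$ with $m=1<2=2n-q$, while the Proposition's bound $n-q/2=1$ is attained. Here $\lambda^{-1}(0)$ contains $E\times_B E$ for $E$ the exceptional divisor, and it is that component whose image is $2$-dimensional. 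The paper's mechanism goes in the opposite direction from yours: from $(\delta_X^*G)^{m+1}=0$ on the diagonal it deduces, via the auxiliary class $P=p_1^*M+p_2^*M-(B+\sigma^*B)$ — which restricts to zero on $\Delta_A=d^{-1}(0)$ and hence is numerically trivial on every fibre $d^{-1}(a)$ by algebraic equivalence — that $G^{2m+1}\cdot\lambda^*(a)=0$ for all $a\in A$; since $G$ is big and nef this bounds the dimension of the \emph{general} fibre of $\lambda$ by $2m$, and Lemma \ref{lem:debarredifference} then yields $2m\ge 2n-\min(q,2a)$. Some such device for propagating the vanishing from the diagonal to the general fibre of $\lambda$, rather than an estimate on the special fibre containing the diagonal, is what your argument is missing.
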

\begin{proof}
	Suppose that $\Delta_X\subset X\times X$ is contractible to a variety of dimension $m$.
	As pointed out in Lemma~\ref{lemma:iffsemiample}, there exists a big and semi-ample divisor $D$ on $X\times X$ satisfying \eqref{eq:intersection}.
	By Lemma~\ref{lemma:picard group decomposition},
	\[	D = \pi_1^* D_1 + \pi_2^* D_2 - (\alpha\times \alpha)^* B \]
	for some $D_1, D_2\in \Pic_\QQ(X)$ and $B \in \Pic_\QQ(A\times A)$ satisfying \eqref{eq:orthogonality}.

	Since $D$ is big and nef and $(\pi_i^*D_i)|_{\pi_i^{-1}(p)}=0=((\alpha\times \alpha)^* B)|_{\pi_i^{-1}(p)}$, we see that both $D_1$ and $D_2$ are big and nef on $X$. Furthermore, we can choose $D$ such that $D_1 = D_2$ by applying the group action $\sigma\colon X\times X \to X\times X$ sending $\sigma(x_1,x_2) = (x_2,x_1)$. More precisely, let
	\[	G = D + \sigma^* D = \pi_1^* (D_1 + D_2) + \pi_2^* (D_1+ D_2) - (\alpha\times \alpha)^* (B + \sigma^* B) \]
	where we use $\sigma$ for both maps $X\times X\to X\times X$
	and $A\times A \to A\times A$ given by $\sigma(x_1,x_2) = (x_2,x_1)$.

	As a sum of big and semi-ample divisors, $G$ is also big and semi-ample on $X\times X$, and since
	\[	D\Big|_{\Delta_X} = \sigma^*(D) \Big|_{\Delta_X} \]
	in $\Pic_\QQ(\Delta_X) = \Pic_\QQ(X)$, we still have
	\[	G^m \Delta_X \ne 0 \quad\text{and}\quad G^{m+1} \Delta_X = 0 \]
	or equivalently,
	\begin{equation}\label{eq:intersection2}
		\delta_X^*(G)^m \ne 0 \quad\text{and}\quad \delta_X^*(G)^{m+1} = 0
	\end{equation}
	for $\delta_X\colon X\to X\times X$ given by $\delta_X(x) = (x,x)$.

	Restricting $G$ to $\Delta_X$, we have
	\begin{equation}\label{eq:intersection3}
		\delta_X^*(G) = 2(D_1+D_2) - \alpha^* \delta_A^*(B + \sigma^* B) = 2(D_1 + D_2 - \alpha^* M)
	\end{equation}
	in $\Pic_\QQ(X)$ for $\delta_A\colon A\to A\times A$ given by $\delta_A(a) = (a,a)$ and
	\[	M = \frac{1}{2}\delta_A^* (B + \sigma^* B). \]
	It follows from \eqref{eq:intersection2} and \eqref{eq:intersection3} that
	\begin{equation}\label{eq:intersection4}
		(D_1 + D_2 - \alpha^* M)^{m+1} = 0.
	\end{equation}

	Let us consider
	\[	P = p_1^* M + p_2^* M - (B + \sigma^* B) \]
	in $\Pic_\QQ (A\times A)$, where $p_i\colon A\times A\to A$ are the two projections.

	Let $d\colon A\times A\to A$ be the difference map $d(a,b) = a-b$. Then
	\[	P \Delta_A = P d^*(0) = 0. \]
	Since $d^*(0)$ and $d^*(a)$ are algebraically equivalent, we conclude that
	\[	P d^*(a)  = 0 \]
	for all $a\in A$. Then
	\begin{equation}\label{eq:intersection5}
		(\alpha\times \alpha)^* P . \lambda^*(a) = (\alpha\times\alpha)^* (P d^*(a)) = 0
	\end{equation}
	for all $a\in A$, where $\lambda = d\circ(\alpha \times \alpha)\colon X\times X\to A$.

	We claim that
	\begin{equation}\label{eq:intersection6}
		G^{2m+1} \lambda^*(a) = 0
	\end{equation}
	for all $a\in A$. This follows from
	\[
		\begin{aligned}
			G^{2m+1} \lambda^*(a) & = \big((G-(\alpha\times\alpha)^* P) + (\alpha\times\alpha)^* P\big)^{2m+1} \lambda^*(a)               \\
			                      & = (G-(\alpha\times\alpha)^* P)^{2m+1} \lambda^*(a) \hspace{24pt} \text{(by \eqref{eq:intersection5})} \\
			                      & = \big(\pi_1^*(D_1 + D_2 - \alpha^* M) + \pi_2^*(D_1 + D_2 - \alpha^* M)\big)^{2m+1} \lambda^*(a)     \\
			                      & = 0 \hspace{24pt} \text{(by \eqref{eq:intersection4})}.
		\end{aligned}
	\]

	Since $G$ is big and nef on $X\times X$, \eqref{eq:intersection6} implies that $\lambda\colon X\times X\to A$ has relative dimension at most $2m$.
	Hence
	\[
			2m \ge \dim(X\times X) - \dim \lambda(X\times X)
	\]
	and the proposition follows.
\end{proof}

\begin{proof}[Proof of (B) $\Rightarrow$ (C) in Theorem~\ref{thm:diagonal contraction}]
From the assumption, it is easy to see that \eqref{eq:contraction2point2} holds. Let $Y = \alpha(X)$. Clearly, $\lambda$ is generically one-to-one onto its image if and only if the map
\[
\begin{tikzcd}
T_{Y,a} \oplus T_{Y,b} \ar{r} & T_{A,a-b} \cong H^0(T_A)
\end{tikzcd}
\]
on tangent spaces given by $(v_1,v_2)\to v_1 - v_2$ is injective for two general points $a,b\in Y$. That is,
\begin{equation}\label{eq:tangent}
\dim (T_{Y,a} \cap T_{Y,b}) = 0
\end{equation}
where both $T_{Y,a}$ and $T_{Y,b}$ are considered as subspaces of $H^0(T_A)$.

Suppose that there exists an abelian subvariety $B\subset A$ such that
\[
\dim B < 2\dim ((B+a)\cap Y) = 2\dim ((B+b)\cap Y)
\]
Then
\[
\dim T_{B,0} < 2\dim (T_{B,0}\cap T_{Y,a}) = 2\dim (T_{B,0}\cap T_{Y,b})
\]
and it follows that
\[
\dim (T_{B,0}\cap T_{Y,a} \cap T_{Y,b}) > 0
\]
which contradicts \eqref{eq:tangent}. 
\end{proof}

From the above proof, we see that $\lambda$ is generically finite if and only if $\dim X = \dim Y$ and \eqref{eq:tangent} holds.

\begin{proof}[Proof of (C) $\Rightarrow$ (B) in Theorem~\ref{thm:diagonal contraction}]
Let $Y=\alpha(X)\subset A=\Alb(X)$, and let $B$ and $\pi\colon A\to A/B$ be as in Lemma~\ref{lem:dimYminusY}, so that
\[
\dim(Y-Y)=2\dim\pi(Y)+\dim B.
\]
Write $f$ for the dimension of a general fibre of $\pi|_Y\colon Y\to\pi(Y)$, so that $\dim\pi(Y)=\dim Y-f$. The general fibre of $\pi|_Y$ over $\pi(a)$ is $Y\cap(B+a)$, so $f=\dim\bigl((B+a)\cap Y\bigr)$ for general $a\in Y$. Condition (C) applied to the abelian subvariety $B$ gives $\dim B\ge 2f$, hence
\[
\dim(Y-Y)=2(\dim Y-f)+\dim B\ge 2\dim Y - 2f + 2f = 2\dim Y,
\]
as required.
\end{proof}

\begin{proof}[Proof of (B) $\Leftrightarrow$ (D)]
Since $\alpha^*\colon H^0(\Omega^1_A)\xrightarrow{\sim} H^0(\Omega^1_X)$ is an isomorphism and $\Omega^1_A$ is trivial on $A$, evaluation at a single point factors as
\[
H^0(\Omega^1_X)\xrightarrow{\sim} H^0(\Omega^1_A)\xrightarrow{\mathrm{ev}_{\alpha(x)}} \Omega^1_{A,\alpha(x)}\xrightarrow{d\alpha_x^*}\Omega^1_{X,x},
\]
where $\mathrm{ev}_{\alpha(x)}$ is an isomorphism. Thus, surjectivity of the composition at a single
general $x$ is equivalent to the pullback $d\alpha_x^*$ being surjective, or equivalently $d\alpha_x
\colon T_{X,x} \to T_{A,\alpha(x)}$ being injective, i.e., $X$ having maximal Albanese dimension. 

Similarly, the two-point evaluation map $H^0(\Omega^1_X) \to \Omega^1_{X,x} \oplus \Omega^1_{X,y}$
is surjective if and only if its dual map $T_{X,x} \oplus T_{X,y} \to H^0(T_A) \cong
\mathrm{Lie}\,A$, given by $(u, v) \mapsto d\alpha_x(u) + d\alpha_y(v)$, is injective. In other
words the images of the differentials intersect trivially, so the
subspaces $T_{Y,\alpha(x)}$ and $T_{Y,\alpha(y)}$ of $\mathrm{Lie}\,A$ intersect only at the origin,
which is exactly condition \eqref{eq:tangent}. As shown in the proof of (B)$\Rightarrow$(C) above,
this holds if and only if $\lambda$ is generically finite onto its image, i.e., condition (B).

Finally, the two formulations of (D) are equivalent. Forms in $H^0(\Omega^1_X\otimes I_x)$ are exactly those vanishing at $x$, so for any $\omega_1,\dots,\omega_n\in H^0(\Omega^1_X\otimes I_x)$, the wedge $\omega_1\wedge\dots\wedge\omega_n\in H^0(K_X)$ is nonzero if and only if at some general $y$ we have that $\omega_1(y),\dots,\omega_n(y)$ are linearly independent in $\Omega^1_{X,y}$, i.e.\ they span $\Omega^1_{X,y}$. Combined with surjectivity at the single point $x$ alone (i.e., maximal Albanese dimension, which is implied by either formulation), this is exactly surjectivity of $H^0(\Omega^1_X)\to\Omega^1_{X,x}\oplus\Omega^1_{X,y}$.
\end{proof}

\begin{proof}[Proof of Theorem~\ref{thm:contract2variety}]
Suppose that there exists a generically finite projective morphism $f\colon X\times X\to Y$ such that $\dim f(\Delta_X) = m$. Then it is clear that we have a dominant morphism $X\cong \Delta_X \to W = f(\Delta_X)$. Since $f$ contracts $\Delta_X$ to $W$, it contracts $\Delta_{X_p}\subset X_p\times X_p$ to a point for all $p\in W$. By the same argument as for Proposition~\ref{prop:irreg}, we see that there exists a big and nef divisor $G$ on $X\times X$ such that
\begin{equation}\label{eq:G}
G \lambda_p^*(a) = 0
\end{equation}
for $p\in W$ general and all $a\in A$, where $\lambda_p\colon X_p\times X_p\to A$ is the restriction of
$\lambda\colon X\times X\to A$ to $X_p\times X_p$.

We claim that $G$ is big and nef on $X_p\times X_p$ for $p\in W$ general. Otherwise,
\[
G^{2n-2m}.(X_p\times X_p) = 0
\]
for $n=\dim X$. Since $X_p\times X_p$ and $X_p\times X_q$ are algebraically equivalent for $p,q\in W$ general, we have
\[
G^{2n-2m}.(X_p\times X_q) = 0
\]
for $p,q\in W$ general.
This is impossible since $G$ is big on $X\times X$ and $X_p\times X_q$ covers $X\times X$. So $G$ is big and nef on $X_p\times X_p$ for $p\in W$ general. Combining this with \eqref{eq:G}, we see that $\lambda_p$ maps $X_p\times X_p$ generically finitely onto its image.

On the other hand, suppose that 
there is a dominant projective morphism $\pi\colon X\to W$ such that
$\dim W = m$ and $\lambda$ maps $X_p\times X_p$ generically finite onto its image.

Let us consider $\eta\colon X\times X \to W\times W\times A$ given by
\[\eta(x_1,x_2) = (\pi(x_1), \pi(x_2), \lambda(x_1,x_2)).\]
Clearly, $\eta$ contracts $\Delta_X$ to $\Delta_W \times \{0\}\cong W$. On the other hand, since $\eta$ maps the fibre of $X\times X$ over $(p,p)\in W\times W$ generically finitely onto its image, it is generically finite over its image.
\end{proof}

\begin{proof}[Proof of Theorem~\ref{thm:isooutsidediagonal}]
For the ``if'' part, we consider the map $\lambda\colon X\times X\to A$ defined in the proof of Proposition~\ref{prop:irreg}. 

From \eqref{eq:dim2}, the map $d\colon \alpha(X)\times \alpha(X) \to A$ is finite outside of the diagonal $\Delta_{\alpha(X)}$.
And since $\alpha\colon X\to A$ is finite over $\alpha(X)$,
the map
\[
\begin{tikzcd}
X\times X \setminus X\times_A X = X\times X \setminus \lambda^{-1}(0) \ar{r}{\lambda} & A\setminus \{0\}
\end{tikzcd}
\]
is finite. On the other hand, from \eqref{eq:dim1},
\[
X\times_A X = \Delta_X \sqcup \Sigma
\]
for a finite set $\Sigma$ of points.
It suffices to take $X\times X\xrightarrow{f} Y\to A$ to be the Stein factorisation of $\lambda$.

For the ``only if'' part, let $f\colon X\times X\to Y$ be a birational morphism which is an isomorphism outside the diagonal $\Delta_X$. We let $D = f^*L$ for an ample divisor $L$ on $Y$.
As in the proof of Proposition~\ref{prop:irreg},
we let
\[
\begin{aligned}
D &= \pi_1^* D_1 + \pi_2^* D_2 - (\alpha\times \alpha)^* B\\
G &= D + \sigma^* D\\
M &= \frac{1}{2}\delta_A^* (B + \sigma^* B)\\
P &= p_1^* M + p_2^* M - (B + \sigma^* B).
\end{aligned}
\]
From \eqref{eq:intersection4},
\[
D_1 + D_2 - \alpha^* M = 0
\]
and hence
\[
G - (\alpha\times \alpha)^* P = \pi_1^*(D_1 + D_2 - \alpha^* M) + \pi_2^*(D_1 + D_2 - \alpha^* M) = 0.
\]
In addition, since $f$ does not contract any curve outside of $\Delta_X$, we have
\[
G . C > 0
\]
for all irreducible curves $C\not\subset \Delta_X$. 

If \eqref{eq:dim1} fails, then there exists an irreducible curve $C\subset X\times_A X = \lambda^{-1}(0)$ such that $C\not\subset \Delta_X$, which implies
\[
(\alpha\times \alpha)^*P . C = G.C > 0.
\]
On the other hand, since $(\alpha\times \alpha)(C) \subset \Delta_A$ and $P\Delta_A = 0$,
we have
\[
P . (\alpha\times \alpha)_* C = 0,
\]
which is a contradiction, giving \eqref{eq:dim1}. It is easy to see that this implies that $\alpha\colon X\to A$ is finite and birational over its image.

Similarly, if \eqref{eq:dim2} fails for some $a\ne 0$, then there exists an irreducible curve $C\subset \lambda^{-1}(a)$ and so
\[
(\alpha\times \alpha)^*P . C = G.C > 0.
\]
On the other hand, since $(\alpha\times \alpha)(C) \subset d^{-1}(a)$ and $Pd^* (a) = 0$,
we have
\[
P . (\alpha\times \alpha)_* C = 0,
\]
which is a contradiction, giving \eqref{eq:dim2}.
\end{proof}

\begin{proof}[Proof of Theorem~\ref{thm:multidiagonal}]
Using the same argument for Lemma~\ref{lemma:picard group decomposition} and the Theorem of the Cube, we see that every $D\in \Pic_\QQ(X^n)$ can be written as
\[
D = \sum_{i=1}^n \pi_i^* D_i - \sum_{1\le i<j\le n} \pi_{ij}^* (\alpha\times\alpha)^* B_{ij}
\]
for some $D_i\in \Pic_\QQ(X)$ and $B_{ij}\in \rho_A^*\big(\HH^1(A,\QQ)\otimes \HH^1(A,\QQ)\cap \HH^{1,1}(A\times A)\big)$,
where $\alpha\colon X\to A$ is the albanese map, $\pi_{ij}\colon X^n\to X\times X$ is the $ij$-th projection and $\rho_A$ is the surjection $\Pic_\QQ(A\times A) \twoheadrightarrow \HH^{1,1}(A\times A,\QQ)$.

Suppose that such $f\colon X^n \to Y$ exists. Let $D = f^* L$ for some ample divisor $L$ on $Y$. Then
$D$ is big and semi-ample and
\[
D\Big|_{\Delta_{ij}} = 0
\]
for all $1\le i<j\le n$. Let
\[
G = \sum_{\sigma\in \Sigma_n} \sigma^* D
\]
where $\Sigma_n$ is the symmetric group on $n$ letters and $\sigma\colon X^n\to X^n$ is the action of $\sigma\in \Sigma_n$ on $X^n$.

Clearly, $G$ remains big and semi-ample,
\[
G\Big|_{\Delta_{ij}} = 0
\]
for all $1\le i<j\le n$ and we have a decomposition
\[
G = \sum_{i=1}^n \pi_i^* M - \sum_{1\le i<j\le n} \pi_{ij}^* (\alpha\times \alpha)^* B
\]
for some $M\in \Pic_\QQ(X)$ and $B\in \rho_A^*\big(\HH^1(A,\QQ)\otimes \HH^1(A,\QQ)\cap \HH^{1,1}(A\times A)\big)$ satisfying that $B = \sigma^* B$ for $\sigma\colon A\times A \to A\times A$ given by $\sigma(a,b) = (b,a)$.

By restricting $G$ to $X\times (p_2,\ldots,p_n)$ for $(p_2,\ldots,p_n)\in X^{n-1}$ general, we see that $M$ is big and nef and by restricting it to
\[
\Delta_{12} \cap \bigcap_{i=3}^n \pi_i^{-1}(p_i)
\]
for $(p_3,\ldots,p_n)\in X^{n-2}$, we see that
\[
2M = \delta_X^* (\alpha\times\alpha)^* B
\]
where $\delta_X\colon X\to X\times X$ is the map $\delta_X(p) = (p,p)$.
On the other hand, by restricting $G$ to
\[
\Delta_{123} \cap \bigcap_{i=4}^n \pi_i^{-1}(p_i)
\]
for $(p_4,\ldots,p_n)\in X^{n-3}$, we see that
\[
3M = 3\delta_X^* (\alpha\times \alpha)^* B
\]
which is impossible since $M$ is big. 
\end{proof}

\bibliographystyle{amsalpha}

\providecommand{\bysame}{\leavevmode\hbox to3em{\hrulefill}\thinspace}
\providecommand{\MR}{\relax\ifhmode\unskip\space\fi MR }
\providecommand{\MRhref}[2]{%
  \href{http://www.ams.org/mathscinet-getitem?mr=#1}{#2}
}
\providecommand{\href}[2]{#2}

\end{document}